
\documentclass[letterpaper,11pt]{amsart}


\usepackage[margin=1.2in]{geometry}
\usepackage{amsmath,amsthm,amssymb}
\usepackage{xspace,xcolor}
\usepackage[
  breaklinks,colorlinks,
  citecolor=teal,linkcolor=teal,urlcolor=teal
]{hyperref}
\usepackage[alphabetic]{amsrefs}
\usepackage[all]{xy}
\usepackage{booktabs}





\theoremstyle{plain}
\newtheorem{thm}{Theorem}[section]

\newtheorem{lem}[thm]{Lemma}

\theoremstyle{definition}

\theoremstyle{remark}
\newtheorem{rmk}[thm]{Remark}
\newtheorem{eg}[thm]{Example}

\numberwithin{equation}{section}


\def\Z{{\mathbb Z}}
\def\Q{{\mathbb Q}}

\def\C{{\mathbb C}}

\def\A{{\mathbb A}}
\def\P{{\mathbb P}}
\def\F{{\mathbb F}}

\def\cE{\mathcal{E}}
\def\cF{\mathcal{F}}

\def\cI{\mathcal{I}}

\def\cN{\mathcal{N}}
\def\cO{\mathcal{O}}
\def\cP{\mathcal{P}}

\def\cS{\mathcal{S}}

\def\cX{\mathcal{X}}
\def\cY{\mathcal{Y}}
\def\cZ{\mathcal{Z}}

\def\E{\mathcal{E}}
\def\F{\mathcal{F}}

\def\O{\mathcal{O}}
\def\X{\mathcal{X}}
\def\Y{\mathcal{Y}}
\def\Z{\mathcal{Z}}

\def\fa{\mathfrak{a}}

\def\fm{\mathfrak{m}}

\def\a{\alpha}

\def\g{\gamma}

\def\f{\phi}
\def\ff{\psi}

\def\p{\pi}

\def\s{\sigma}
\def\t{\tau}

\def\.{\cdot}
\def\^{\widehat}
\def\~{\widetilde}
\def\o{\circ}
\def\ov{\overline}

\def\({\left(}
\def\){\right)}

\renewcommand{\and}{ \ \ \text{ and } \ \ }

\newcommand{\fall}{ \ \ \text{ for all } \ \ }

\newcommand{\where}{ \ \ \text{ where } \ \ }

\def\reg{\mathrm{reg}}
\def\sing{\mathrm{sing}}

\def\an{\mathrm{an}}

\DeclareMathOperator{\codim} {codim}

\DeclareMathOperator{\Proj} {Proj}
\DeclareMathOperator{\Spec} {Spec}
\DeclareMathOperator{\Spf} {Spf}

\DeclareMathOperator{\Bl} {Bl}
\DeclareMathOperator{\val} {val}

\DeclareMathOperator{\Ex} {Ex}

\DeclareMathOperator{\ord} {ord}

\DeclareMathOperator{\Ext} {Ext}

\begin{document}



\title     {Three-dimensional counter-examples to the Nash problem}

\author    {Tommaso de Fernex}
\address   {Department of Mathematics,
            University of Utah,
            155 South 1400 East,
            Salt Lake City, UT 84112, USA} 
\email     {defernex@math.utah.edu}


\thanks{2010 \emph{Mathematics Subject Classification.} 
Primary 14E05; Secondary 14E18, 14E15}

\thanks{\emph{Key words and phrases.} 
Arcs, divisorial valuations, resolution of singularities}

\thanks    {Research partially supported by NSF CAREER Grant DMS-0847059
and by the Simons Foundation.}

\thanks{Compiled on \today. Filename \small\tt\jobname }


\begin{abstract}
The Nash problem asks about the existence of a correspondence between
families of arcs through singularities of complex
varieties and certain types of divisorial valuations. 
It has been positively settled in dimension 2
by Fern\'andez de Bobadilla and Pe Pereira, 
and it was shown to have a negative answer in all dimensions $\ge 4$
by Ishii and Koll\'ar.
In this note we discuss examples which show that the
problem has a negative answer in dimension 3 as well.
These examples bring also to light the different nature of the problem
depending on whether it is formulated in
the algebraic setting or in the analytic setting.
\end{abstract}

\maketitle

\section{Introduction}

The space of arcs through the singularities
of a complex variety has finitely many irreducible
components, each of which is naturally associated to
a divisorial valuation of the function field of the variety. 
Every valuation arising in this
way is essential for the singularity, in the sense that its center in 
any resolution of singularities is an irreducible component of the 
inverse image of the singular locus. 
The Nash problem asks whether, conversely, every essential valuation
corresponds to a component of the space of arcs through the singularities.
This summarizes the main content of Nash's
influential paper \cite{Na}, which circulated as a preprint since the late 1960's.

The Nash problem has attracted the attention of the mathematical community for a long time, 
and the surface case has finally been settled by 
Fern\'andez de Bobadilla and Pe Pereira \cite{FP}. 
The problem has however a negative answer in general:
examples of essential divisorial valuations that do not correspond to any irreducible component
of the space of arcs through the singularities were found in all dimensions
$\ge 4$ by Ishii and Koll\'ar \cite{IK}.

The purpose of this note is to extend the class of examples to dimension 3, 
the only dimension not covered by these results. To this end, we study two examples.
We first consider the affine hypersurface in $\A^4$ of equation
\[
(x_1^2 + x_2^2 + x_3^2)x_4 + x_1^3 + x_2^3 + x_3^3 + x_4^5 + x_4^6 = 0.
\]
It turns out that this gives a counter-example to the Nash problem in the category of algebraic 
varieties but not in the category of complex analytic spaces. 
By degenerating the above equation to
\[
(x_2^2 + x_3^2)x_4 + x_1^3 + x_2^3 + x_3^3 + x_4^5 + x_4^6 = 0,
\]
we then obtain an example that works in both categories.

In the first example we provide two arguments 
to show that a certain divisorial valuation 
is not in the Nash correspondence: the first argument
uses a lemma from Ishii and Koll\'ar's paper, and the second one 
relies on the simple observation that such valuation is not essential in the analytic category. 
The same property is then deduced for the second example
by analyzing the deformation of the corresponding family of arcs.
To prove that these valuations are essential in the appropriate categories
we take into account discrepancies and factoriality.
The distinction in the nature of the two examples is a consequence of the
difference of being $\Q$-factorial in the Zariski topology and in the analytic topology.

For clarity of exposition, we will present one example at a time.
Section~\ref{s:IK} is devoted to a discussion of the lemma of Ishii and Koll\'ar.
In the last section we briefly compare our results to the recent paper \cite{Kol2}.

Unless otherwise stated,
we work in the category of algebraic varieties over the field of complex numbers;
the complex analytic setting will also be considered. 
Everything discussed in the algebraic setting holds more generally 
when the ground field is any algebraically
closed field of characteristic zero. 

The author is indebted to Roi Docampo for several precious discussions and suggestions,
and to J\'anos Koll\'ar for valuable remarks
concerning the analytic counterpart of the problem which have been a motivation 
for the second example. He is also grateful to
Charles Favre, Javier Fern\'andez de Bobadilla and Shihoko Ishii for useful comments.
Special thanks go to the referees for their many comments and suggestions which have 
helped improving the exposition of the paper.

\section{The Nash problem}
\label{s:nash-problem}
 
Let $X$ be an algebraic variety, and let $J_\infty(X)$ be the space of formal arcs on $X$. 
This space is defined as the inverse limit of the
jet schemes $J_m(X)$ of $X$, and thus $J_\infty(X)$ is a scheme over $\C$. 
Its Zariski topology coincides with the limit topology \cite[(8.2.10)]{EGAiv}.
For more details on jet schemes and arc spaces, we refer to \cite{Na,DL,EM}.

We mainly regard $J_\infty(X)$ as a topological space, consisting of $\C$-valued arcs:
\[
J_\infty(X) = \{ \f \colon \Spec\C[[t]] \to X \}.
\]
Similarly, each $m$-th jet scheme of $X$, as a set, consists of $\C$-valued $m$-jets:
\[
J_m(X) = \{ \gamma \colon \Spec\C[t]/(t^{m+1}) \to X \}.
\]
There are canonically defined truncation maps 
\[
\p_{X,m} \colon J_\infty(X) \to J_m(X) \and \p_X \colon J_\infty(X) \to X,
\]
where $\p_X = \p_{X,0}$ is the projection that
maps an arc $\f$ to its base point $\f(0) \in X$.

A basic property that we will use several times is
that if $X$ is smooth and $T \subset X$ is an irreducible closed subset, 
then $\p_X^{-1}(T) \subset J_\infty(X)$ is also irreducible.
This property may fail if $X$ is singular. 

Our focus is on the set of arcs through the singular locus $X_\sing$ of $X$, namely, 
the set
\[
\p_X^{-1}(X_\sing) \subset J_\infty(X)
\]
Relevant information about this set can be derived 
by looking at resolutions of singularities.
The following is an overview of some general results on the 
structure of this set, proven in \cite[Theorem~2.15]{IK} and \cite{Na}. 

By definition, a resolution of singularities of $X$ consists of
a proper birational morphism $f \colon Y \to X$ from a smooth variety $Y$.
The exceptional locus $\Ex(f)$ of $f$ is the complement of the largest open set
on which $f$ induces an isomorphism.

Let $f \colon Y \to X$ be a resolution of singularities,
and let $E_1, \dots, E_k$ be the irreducible components of $f^{-1}(X_\sing)$.
For clarity of exposition, assume that $f$
is an isomorphism over the regular locus $X_\reg$ of $X$, 
so that $\Ex(f) = f^{-1}(X_\sing)$.
Then the induced map $f_\infty \colon J_\infty(Y) \to J_\infty(X)$ is
surjective and gives a continuous bijection 
\[
\big(J_\infty(Y) \smallsetminus J_\infty(f^{-1}(X_\sing))\big) \xrightarrow{1-1}
\big(J_\infty(X) \smallsetminus J_\infty(X_\sing)\big).
\]
Unless $X$ is smooth and $f$ is an isomorphism, the inverse of this map
is not continuous. The Nash problem is an effort to
gain some understanding of how the topology changes. 

Set-theoretically, we have
\[
f_\infty \big(\p_Y^{-1}(f^{-1}(X_\sing))\big) = \p_X^{-1}(X_\sing).
\]
The set $\p_Y^{-1}(f^{-1}(X_\sing))$ is the
union of the irreducible sets $\p_Y^{-1}(E_j)$, and hence $\p_X^{-1}(X_\sing)$
has finitely many irreducible components $C_1,\dots,C_s$, each 
equal to the closure of some $f_\infty \big(\p_Y^{-1}(E_j)\big)$.
Following the terminology of \cite{Ishii}, we shall 
refer to them as the {\it Nash components}. 
 
The function field $K_i$ of a Nash component $C_i$ is an extension of the function field 
of $X$, and
thus the generic point $\a_i$ of $C_i$, viewed as an arc $\a_i \colon \Spec K_i[[t]] \to X$,
defines a valuation $\val_{\a_i}$ on $X$ (also denoted by $\val_{C_i}$).
If for some $j$ the component $E_j$ has codimension 1 in $Y$
and $f_\infty\big(\p_Y^{-1}(E_j)\big)$ is dense in $C_i$, then $\val_{\a_i}$
is just the divisorial valuation $\val_{E_j}$ given by the order
of vanishing at the generic point of $E_j$. In particular, since
we can pick $f$ such that $f^{-1}(X_\sing)$ has pure
codimension 1, it follows that all valuations associated
to Nash components are divisorial. 
Note that each $C_i$ is dominated by exactly one of the $\p_Y^{-1}(E_j)$. 

In general, if $f \colon Y \to X$ is an arbitrary resolution of singularities,
then every valuation corresponding to a Nash component $C_i$ of $\p_X^{-1}(X_\sing)$
must have center in $Y$ equal to an irreducible component of $f^{-1}(X_\sing)$. 
If we call \emph{essential divisorial valuation}
any divisorial valuation on $X$ whose center on every resolution $f \colon Y \to X$
is an irreducible component of $f^{-1}(X_\sing)$, then we obtain a map
\[
\{\,\text{Nash components of $\p_X^{-1}(X_\sing)$}\,\} 
\longrightarrow
\{\,\text{essential divisorial valuations on $X$}\,\}.
\]
This is the \emph{Nash map} (cf.~\cite{Na}).
It is clear by construction that the Nash map is injective, 
and the question is whether it is surjective.

\section{The analytic setting}
\label{s:analytic}

Resolutions of singularities play an important role in the Nash problem: 
first of all, to prove that there are only finitely many families of 
arcs through the singular locus, and furthermore to define the notion of 
essential divisorial valuation. 
At the time of the writing of \cite{Na}, resolution of singularities 
was not yet known to exist for complex analytic varieties, and 
the treatment was restricted to algebraic varieties. 
The existence of resolutions in the analytic setting was however 
established a few years later, 
and is thus natural to formulate the Nash problem in this setting as well.

The arc space $J_\infty(X)$ of a complex analytic variety $X$ can be defined locally: 
if $X$ is defined by the vanishing of finitely many 
holomorphic functions $h_j(x_1,\dots,x_n)$ in an open domain $U \subset \C^n$,
then $J_\infty(X)$ is the set of $n$-ples of formal power series $x_i(t) \in \C[[t]]$
such that $(x_1(0),\dots,x_n(0)) \in U$ and $h_j(x_1(t),\dots,x_n(t)) \equiv 0$ for all $j$. 
The spaces of jets $J_m(X)$ are defined analogously, and are analytic spaces. 
Just like in the algebraic case, the arc space of an analytic variety 
is the inverse limit of the jet spaces, 
and as such inherits the limit (analytic) topology.

It follows from the local description of arc spaces that if 
$X$ is a complex analytic variety and $X^\an$
is the associated analytic space, then 
the arc spaces $J_\infty(X)$ and $J_\infty(X^\an)$ are in natural bijection, 
and so are their jet spaces. The projections are also compatible, and hence if we 
extend to the analytic setting the notation introduced in the algebraic setting, 
then, under these bijections, we have $\p_{X^\an,m} = \p_{X,m}$ and $\p_{X^\an} = \p_{X}$.
Note also that $(X^\an)_\sing = X_\sing$.

By \cite{Gr}, the image of $\p_{X^\an}^{-1}((X^\an)_\sing)$
in every $J_m(X^\an)$ is a constructible subset, 
and thus its closure has a decomposition into
a union of finitely many irreducible analytic subvarieties
of $J_m(X^\an)$. 
Such decomposition stabilizes for $m \gg 1$, 
and determines, in the limit, a decomposition of $\p_{X^\an}^{-1}((X^\an)_\sing)$
into the union of finitely many closed sets (see \cite{Na}).
We call these sets
the {\it Nash components} of $\p_{X^\an}^{-1}((X^\an)_\sing)$.
This decomposition agrees with the decomposition
of $\p_X^{-1}(X_\sing)$ into Nash components
described in the previous section.

Still, the Nash problem depends on the choice of the category. 
This has to do with the notion of essential divisorial valuation.
The issue is that there may be
resolutions of singularities given by analytic spaces that are not schemes.
We will see this occurring in the first of the two examples 
discussed in this paper.

Every divisorial valuation $\val_E$ on $X$ induces a divisorial valuation on $X^\an$, 
which, with slight abuse of notation, we shall still denote by $\val_E$.
(The converse is also true for valuations with zero-dimensional centers;
this may be relevant if $X$ has isolated singularities.)

If we define {\it essential divisorial valuations}
on an analytic variety analogously as we defined essential divisorial valuations
in the algebraic setting, then the set of essential divisorial 
valuations on an algebraic variety $X$ may differ from the set of 
essential divisorial valuations on the associated analytic variety $X^\an$. 
However, since the spaces through the singularities of $X$ and of $X^\an$ 
are in natural bijection and so are their decompositions into Nash components, 
the image of the Nash map is the same regardless of whether we work in the 
algebraic category or in the analytic category. 
The situation is summarized in the following commutative diagram:
\[
\xymatrix@R=8pt@C=15pt{
\{\,\text{Nash components of $\p_X^{-1}(X_\sing)$}\,\} \ar@{^(->}[r] \ar@{=}[dd] 
& \{\,\text{essential divisorial valuations on $X$}\,\} \ar@{_(->}[d] \\
& \{\,\text{divisorial valuations on $X^\an$}\,\} \\
\{\,\text{Nash components of $\p_{X^\an}^{-1}((X^\an)_\sing)$}\,\} \ar@{^(->}[r] 
& \{\,\text{essential divisorial valuations on $X^\an$}\,\} \ar@{^(->}[u] 
}.
\]

The existence of more resolutions in the analytic category may result,
for instance, from the difference between the notion of factoriality
(or, more generally, of $\Q$-factoriality) in the two topologies.

Recall that a normal algebraic variety $X$ is factorial (resp., $\Q$-factorial) 
in the Zariski topology  
if every Weil divisor on $X$ is Cartier (resp., $\Q$-Cartier).
Since every Weil divisor defined on a Zariski open set of $X$ extends to a divisor on $X$, 
this notion is local in the Zariski topology.
We say that $X^\an$ is (locally) 
factorial (resp., $\Q$-factorial) in the analytic topology if for
every Euclidean open set $U \subset X^\an$, 
every Weil divisor on $U$ is Cartier (resp., $\Q$-Cartier).
Differently from the algebraic case, 
this notion is not global, as there may be Weil divisors on $U$
that do not extend to Weil divisors on $X^\an$. 

The following property must be well-known. We give a proof for completeness. 

\begin{lem}
\label{l:small}
Let $f \colon Y \to X$ be a resolution of singularities of 
a normal algebraic variety (resp., of a normal analytic threefold). 
Assume that $\Ex(f)$ has an irreducible component of codimension $\ge 2$ in $Y$. 
Then $X$ is not $\Q$-factorial in the Zariski topology (resp., in the
analytic topology).
\end{lem}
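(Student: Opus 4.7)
The plan is to argue by contradiction: assume $X$ is $\Q$-factorial in the relevant topology and derive a contradiction by intersecting a carefully chosen Cartier divisor on $Y$ with a contracted curve sitting inside the codimension $\geq 2$ exceptional component $Z$.

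First I would choose the curve. Since $Z$ is an irreducible component of $\Ex(f)$, hence a maximal irreducible subset, $Z$ is not contained in any codimension $1$ $f$-exceptional prime divisor $E_i$. A general point $z\in Z$ therefore avoids every $E_i$ and every other component of $\Ex(f)$, and the fiber $f^{-1}(f(z))$ is positive-dimensional and, near $z$, lies entirely in $Z$. Let $C$ be an irreducible component of $f^{-1}(f(z))$ through $z$: it is a proper curve contained in $Z$, contracted by $f$, and satisfies $C\not\subset E_i$ for every $i$ because $z\in C\setminus E_i$.

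Next, pick an effective prime Cartier divisor $D$ on $Y$ with $D\cdot C>0$: in the algebraic case take $D$ a general ample divisor; in the analytic case take $D$ the zero locus of a holomorphic function on a Stein neighborhood of $f^{-1}(f(z))$ vanishing simply at a point of $C$. Under the assumption that $X$ is $\Q$-factorial, $f_*D$ is $\Q$-Cartier, so the pullback $f^*f_*D$ is a well-defined effective $\Q$-Cartier $\Q$-divisor on $Y$. Using $f_*\circ f^* = \id$ on Weil divisors, one has $f_*(f^*f_*D - D)=0$, so
\[
f^*f_*D \;=\; D \;+\; \sum_i a_i\, E_i, \qquad a_i \in \Q,
\]
the sum being over the codimension $1$ $f$-exceptional prime divisors. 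The effectiveness of $f^*f_*D$ together with the fact that $D$ and the $E_i$ are distinct prime divisors forces $a_i \geq 0$ for all $i$.

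Finally, the projection formula applied to the contracted curve $C$ gives $f^*f_*D\cdot C = 0$, so
\[
0 \;=\; D\cdot C \;+\; \sum_i a_i\,(E_i\cdot C).
\]
By construction $D\cdot C > 0$, and for each $i$ we have $E_i\cdot C \geq 0$ because $E_i$ is effective and $C\not\subset E_i$; the right-hand side is therefore strictly positive, a contradiction. I expect the main technical obstacle to be the analytic setup: verifying that analytic $\Q$-factoriality produces a well-defined, effective $\Q$-Cartier pullback $f^*f_*D$ on a neighborhood of $f^{-1}(f(z))$ and that a suitable holomorphic function cutting out $D$ with $D\cdot C>0$ exists on a Stein neighborhood of that fiber. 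Both points follow from properness of $f$ and standard results on Stein neighborhoods of compact analytic subsets, but they require more care than the algebraic case, where one simply picks a general ample divisor.
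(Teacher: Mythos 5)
Your proof is correct and follows essentially the same strategy as the paper's: choose a contracted curve $C$ not contained in any $f$-exceptional prime divisor, choose a divisor $D$ with $D \cdot C > 0$ that is not $f$-exceptional, and derive a contradiction from the projection formula $f^*f_*D \cdot C = 0$ together with the effectivity of the exceptional part of $f^*f_*D - D$. The only small caveat is in the algebraic case, where $Y$ need not be quasi-projective, so a ``general ample divisor'' may not exist; the paper sidesteps this by taking the Zariski closure of a general hyperplane section of an affine open subset of $Y$ meeting $C$, which produces the needed $D$ without any projectivity hypothesis.
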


\begin{proof}
Let $C \subset Y$ be an irreducible curve that is contracted by $f$
but is not contained in any codimension 1 component of $\Ex(f)$.

If $X$ is an algebraic variety, then
we take an affine open set $V \subset Y$ intersecting $C$, 
and let $H' \subset V$ be a general hyperplane section intersecting $C$. 
The Zariski closure of $H'$ in $Y$ produces an effective divisor $H$ that is not 
exceptional for $f$ and satisfies $H \. C > 0$. 
If $f_*H$ were $\Q$-Cartier, then we would have 
$f^*f_*H = H + G$ where $G$ is an effective $f$-exceptional 
$\Q$-divisor, and thus $0 = f^*f_*H \. C = H\.C + G\.C> 0$, a contradiction.
Therefore $f_*H$ is not $\Q$-Cartier, and 
hence $X$ is not $\Q$-factorial in the Zariski topology.

If $X$ is an analytic threefold, then $C$ is an irreducible component of $\Ex(f)$. 
We consider
a small portion of hypersurface $H' \subset Y$ transverse to $C$, defined
locally using some analytic coordinates centered
at a general point of $C$. If $U \subset X$ is a sufficiently small
Euclidean open neighborhood of the point $h(C) \in X$, then
the restriction of $H'$ defines a divisor $H$ on $f^{-1}(U)$
that is not exceptional over $U$ and satisfies $H \. C > 0$, 
and the conclusion follows as in the algebraic case. 
\end{proof}

\begin{eg}
\label{eg:small}
A typical example of a variety that is factorial in the Zariski topology but
is not even $\Q$-factorial in the analytic topology is given by hypersurfaces in $\P^4$ 
with some (but not too many) ordinary double points.
Locally analytically, any such threefold $X$ is isomorphic to $xy=zx$ near a singular point $P$.
The ideal $(x,z)$ is not principal but its vanishing 
defines a divisor locally near $P$, hence $X^\an$ is not factorial 
in the analytic topology.
If we blow-up this ideal near each singular point, and glue all charts back together, 
we obtain a small resolution $X' \to X^\an$ in the analytic category,
hence $X^\an$ is not even $\Q$-factorial by Lemma~\ref{l:small}.
On the other hand, if the number of points is small with respect to the degree, 
then $X$ is factorial in the Zariski topology, see \cite{Cy,CDG,Che}.
In particular, $X'$ cannot be a scheme. The point is that the divisor defined locally by $(x,z)$
near $P$ does not extend to a global divisor on $X$.
This particular class of examples will appear below in the discussion of the first example.
\end{eg}

\section{Some results on the Nash problem}

The main approach towards the Nash 
problem was first proposed by Lejeune-Jalabert in \cite{LJ},
and relies on the study of \emph{wedges} on $X$, namely, maps
$\Phi\colon \Spec \C[[s,t]] \to X$. 
Wedges can be thought as
arcs on the arc space $J_\infty(X)$, by viewing $t$ as the parameter for the arcs on $X$
and $s$ as a parameter for families of arcs. 
The basic idea is that if $E_i$ and $E_j$ are two distinct prime divisors 
on a resolution $f \colon Y \to X$  such that 
$f_\infty\big(\p_Y^{-1}(E_i)\big)$ is contained in the closure of 
$f_\infty\big(\p_Y^{-1}(E_j)\big)$, then one can detect this by a
wedge $\Phi$ having $\Phi(0,{}_{-}) \in f_\infty\big(\p_Y^{-1}(E_i)\big)$ and 
$\Phi(s,{}_{-}) \in f_\infty\big(\p_Y^{-1}(E_j)\big)$ for $s \ne 0$. 
The Nash problem reduces then to a lifting problem for wedges.  
The existence of such wedges is a deep fact which follows by the Curve Selection Lemma
of Reguera \cite{Re} (see also \cite{FP2}).

In dimension 2, the essential divisorial valuations are those 
determined by the exceptional divisors in the minimal resolution. 
The Nash problem was positively answered in this case by
Fern\'andez de Bobadilla and Pe Pereira \cite{FP}
(we refer to their paper for a list of references on previous results in dimension 2);
the proof of this result 
is a beautiful interplay of topological methods and formal settings. 

\begin{thm}[\cite{FP}]
For any surface, the Nash map is a bijection.
\end{thm}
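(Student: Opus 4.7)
The plan is to prove surjectivity of the Nash map by showing that the closures $\ov{f_\infty(\p_Y^{-1}(E_i))}$ attached to distinct prime exceptional divisors $E_i$ on the minimal resolution $f\colon Y\to X$ are pairwise incomparable with respect to inclusion. It is a classical fact, prior to and independent of the Nash problem, that for a normal surface singularity the essential divisorial valuations coincide with the orders of vanishing along the exceptional divisors of the minimal resolution; hence the non-adjacency of these finitely many sets will force the Nash map to be a bijection onto the set of essential valuations.

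Following Lejeune-Jalabert's program and using Reguera's Curve Selection Lemma, any hypothetical adjacency $\ov{f_\infty(\p_Y^{-1}(E_\mu))} \supset f_\infty(\p_Y^{-1}(E_\nu))$ with $\nu\ne \mu$ would be witnessed by a formal wedge $\Phi\colon \Spec\C[[s,t]] \to X$ whose generic arc has generic point in $f_\infty(\p_Y^{-1}(E_\mu))$ and whose special arc has generic point in $f_\infty(\p_Y^{-1}(E_\nu))$. The task thus becomes to show that every such $\Phi$ must lift to a wedge $\widetilde\Phi\colon \Spec\C[[s,t]] \to Y$, since then the central arc of $\widetilde\Phi$ would be an irreducible arc on $Y$ whose generic point lies on $E_\nu$ but which is simultaneously a specialization of arcs with generic point on $E_\mu$, forcing $E_\nu = E_\mu$.

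To produce such a lift, I would first invoke Artin--Popescu approximation to replace the formal wedge by a convergent one, obtaining an analytic one-parameter family of small disks $\Phi_s\colon (\mathbb{D},0) \to (X,x)$ with $\Phi_s$ an $E_\mu$-arc for $s\ne 0$ and $\Phi_0$ an $E_\nu$-arc. Each $\Phi_s$ lifts uniquely to an analytic disk $\widetilde\Phi_s$ in $Y$, and the heart of the proof consists in showing that the family $\widetilde\Phi_s$ varies continuously in $s$ in a sense strong enough to pin down the component of the exceptional fiber that $\widetilde\Phi_s(0)$ meets. Combining this continuity with the plumbing description of the link of the singularity along the dual graph of the minimal resolution should yield a topological obstruction preventing the central lift from sitting on a divisor different from the one reached by the generic ones.

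The main obstacle is precisely this convergence/lifting step: after Artin--Popescu one still has to show that the resulting family of lifts to $Y$ is continuous enough for a topological argument on the link to go through, and then extract a combinatorial contradiction from the dual graph and the intersection pattern of the exceptional curves. Producing a robust compactness statement for such families of arcs, and controlling their monodromy on the boundary of a Milnor representative of the singularity, is the technical core of the Fern\'andez de Bobadilla--Pe Pereira proof, and I would expect this to be where essentially all of the work lies.
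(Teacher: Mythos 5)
This theorem is quoted in the paper only as a citation to Fern\'andez de Bobadilla and Pe Pereira; the paper offers no proof of it, so there is no in-paper argument to compare against. What you have written is not a proof but a research plan: you correctly identify the standard reduction (essential valuations for a normal surface singularity are exactly those of the minimal resolution, so one must show the Nash families attached to distinct exceptional prime divisors are pairwise non-adjacent), and you correctly recall that this is reformulated as a wedge-lifting problem via Lejeune-Jalabert's program and Reguera's Curve Selection Lemma. But you then explicitly punt on the only genuinely hard part, namely the topological argument that rules out a non-liftable adjacency. You gesture at ``continuity of the lifted family,'' ``plumbing of the link,'' and ``monodromy on a Milnor representative,'' without producing any obstruction, and you acknowledge in the final paragraph that ``essentially all of the work'' lies there.

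Two further cautions about the sketch as stated. First, after replacing a formal wedge by a convergent one, the statement that each analytic disk $\Phi_s$ ``lifts uniquely to an analytic disk $\widetilde\Phi_s$ in $Y$'' is only automatic when $\Phi_s$ is not entirely contained in the singular locus, and the delicate issue is not pointwise liftability but the limiting behavior of the lifts $\widetilde\Phi_s$ as $s\to 0$; you do not address why the limit lands on a single, and the correct, exceptional component. Second, the actual mechanism in \cite{FP} is not a monodromy computation on the link: the obstruction is obtained by analyzing the returns of the generic arc of the wedge to the singular point and comparing Euler characteristics of suitable pieces of the Milnor fibre of the family, which is a quite different (and more quantitative) argument than the one you propose. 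As written, your proposal is a plausible orientation toward the literature but it does not constitute a proof, nor does it reproduce the specific key idea that makes the surface case work.
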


In higher dimensions, the Nash problem has been positively settled
in a series of cases, see \cite{IK,PPP,LJR}. In general,
however, we have the following negative result.

\begin{thm}[\cite{IK}]
In any dimension $\ge 4$ there are varieties for which the Nash map is not surjective.
\end{thm}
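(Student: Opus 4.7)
The plan is to exhibit explicit $n$-dimensional hypersurfaces, for every $n \ge 4$, that carry an essential divisorial valuation which is not in the image of the Nash map. Following the approach of Ishii and Koll\'ar, I would take as the base case the Fermat cubic fivefold $X_4 = \{x_1^3 + x_2^3 + x_3^3 + x_4^3 + x_5^3 = 0\} \subset \A^5$, which has an isolated canonical hypersurface singularity at the origin, and set $X_n = X_4 \times \A^{n-4}$ in higher dimensions. Since arc spaces, essentiality, and the formation of Nash components all behave well under taking products with a smooth factor, it suffices to treat $n=4$. The candidate valuation is $v_0 = \ord_{E_0}$, where $E_0 \subset \Bl_0 X_4$ is the exceptional divisor of the blowup $\pi\colon \Bl_0 X_4 \to X_4$ of the origin. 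Because $E_0$ is a smooth cubic threefold in $\P^4$, the map $\pi$ is already a resolution of singularities.

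The first step would be to verify that $v_0$ is an essential divisorial valuation. Since $X_4$ is factorial (by the Grothendieck--Lefschetz theorem for hypersurface singularities) and Gorenstein canonical, a direct adjunction computation shows that $E_0$ achieves the minimal log discrepancy among divisors lying over the origin. Given any resolution $f\colon Y \to X_4$, a standard discrepancy argument combined with factoriality forces the center of $v_0$ on $Y$ to be a divisorial component of $f^{-1}(0)$: otherwise one could extract over this center a further exceptional divisor of discrepancy strictly smaller than $a_{E_0}(X_4)$, contradicting the minimality just established. This identifies $v_0$ as essential in the sense of Section~\ref{s:nash-problem}.

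The heart of the argument is to show that $v_0$ is not in the image of the Nash map, and here I would invoke the lemma of Ishii and Koll\'ar that will be established in Section~\ref{s:IK}. Its use is to produce, for each general arc $\varphi$ on $X_4$ with $\val_\varphi = v_0$, a wedge $\Phi\colon \Spec \C[[s,t]] \to X_4$ with $\Phi(0,{}_{-}) = \varphi$ whose generic arc $\Phi(s,{}_{-})$ has valuation $v_1$ strictly greater than $v_0$, where $v_1$ is a suitable weighted-blowup divisorial valuation centered at the origin. The existence of such wedges forces the closure of the set of arcs with valuation $v_0$ to sit inside the closure of the set of arcs with valuation $v_1$, so it cannot itself be a Nash component.

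The main obstacle is the explicit construction of the wedge $\Phi$. Writing $\varphi(t) = (a_1 t, \ldots, a_5 t) + O(t^2)$ with $\sum a_i^3 = 0$, one wants to deform the leading tuple $(a_1,\ldots,a_5)$ along a tangent direction to the Fermat cubic $\{\sum y_i^3 = 0\} \subset \A^5$ and simultaneously reparametrize $t$ by a factor depending on $s$, so that for $s \ne 0$ the leading coefficient of one of the components $\Phi_i(s,t)$ vanishes. The key algebraic input is that the tangent cone to $\{\sum y_i^3 = 0\}$ at a generic smooth point $(a_1,\ldots,a_5)$ contains a direction not accounted for by the Euler vector field or by reparametrizations in $t$; this is exactly what the fifth variable provides, and what fails for fewer variables, explaining why the example only works starting in dimension $4$. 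Once such an initial $s$-derivative is chosen, the higher-order coefficients of $\Phi$ are constructed inductively by solving the defining equation modulo successive powers of $s$, using the smoothness of $X_4$ away from the origin to guarantee that each obstruction vanishes.
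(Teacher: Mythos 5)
Your proposed example does not work, and the gap is not a detail but the choice of variety. The hypersurface $X_4 = \{x_1^3 + \cdots + x_5^3 = 0\} \subset \A^5$ is the cone over a \emph{smooth} cubic threefold, so the single blow-up $\pi\colon \Bl_0 X_4 \to X_4$ is already a resolution of singularities with irreducible exceptional divisor $E_0$. Consequently any essential divisorial valuation has center on $\Bl_0 X_4$ equal to $E_0$, hence equals $\val_{E_0}$: there is exactly one essential valuation. Since $\p_{X_4}^{-1}(O)$ is nonempty and the Nash map is always injective into the set of essential valuations, the Nash map is trivially a bijection for $X_4$, and $v_0 = \val_{E_0}$ \emph{is} in its image. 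No wedge argument can change this; if your construction appeared to slide a generic arc through $E_0$ into a ``deeper'' family, the resulting family of arcs would still have to map under the Nash map to some essential valuation, and the only candidate is $\val_{E_0}$ itself. The same holds after multiplying by a smooth factor.

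The actual Ishii--Koll\'ar examples are cooked up precisely so that the resolution requires \emph{two} blow-ups $Z \to Y \to X$ with two exceptional divisors $F$ (from the first blow-up) and $E$ (from the second). A representative equation is something like $x_1^3 + x_2^3 + x_3^3 + x_4^3 + x_5^6 = 0$: here the tangent cone is degenerate, $F$ is birationally ruled (a cone over a cubic surface, swept out by its rulings), and $E$ is a smooth cubic threefold which is \emph{not} birationally ruled and hence must appear on every resolution, so $\val_E$ is essential. The IK wedge lemma is then used to slide arcs through $E$ into arcs through $F^\circ$, so that $(f\circ g)_\infty(\p_Z^{-1}(E))$ lands in the closure of $f_\infty(\p_Y^{-1}(F^\circ))$ and $\val_E$ is not a Nash valuation. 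Two further corrections to your write-up: first, the essentiality of $E$ in the IK argument hinges on $E$ not being birationally ruled (or, in this paper's variant, on a discrepancy-plus-factoriality argument applied to the \emph{second} exceptional divisor), not on a minimality-of-log-discrepancy property of the first one; second, the reason the construction needs $\dim X \ge 4$ is that in dimension $3$ the surface $E$ would be covered by rational curves and therefore automatically birationally ruled, so one loses the essentiality argument --- it has nothing to do with ``an extra tangent direction provided by the fifth variable.''
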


The examples given in \cite{IK}
are based on the following idea. An isolated singularity $O \in X$
is resolved by a sequence of two blow-ups
$Z \to Y \to X$, each extracting a single divisor. 
The divisor $E \subset Z$ extracted by the second blow-up is not birationally ruled, and thus 
must be essential. It is however covered by lines $L$ whose normal bundles $\cN_{L/E}$
have vanishing first cohomology. This implies that  
the image on $Y$ of any arc with contact order 1 with $E$
can be slided away from the image of $E$, 
along the exceptional divisor $F \subset Y$ of the first blow-up.
Therefore the image of $\p_Z^{-1}(E)$ in $J_\infty(X)$ lies in the closure of
the image of $\p_Y^{-1}(F^\o)$ where $F^\o := F \cap Y_\reg$, 
and hence is not dense in any
irreducible component of $\p_X^{-1}(O)$. This means that $\val_E$
is not in the image of the Nash map.

Here we show that the Nash problem has a negative answer in dimension 3 as well.

\begin{thm}
There are varieties of dimension 3 for which the Nash map is not surjective.
\end{thm}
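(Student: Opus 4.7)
The plan is to exhibit the explicit threefold $X \subset \A^4$ cut out by
\[
(x_1^2 + x_2^2 + x_3^2)x_4 + x_1^3 + x_2^3 + x_3^3 + x_4^5 + x_4^6 = 0
\]
and to produce on it an essential divisorial valuation that fails to lie in the image of the Nash map. The singular locus is the origin, whose tangent cone is the cubic $(x_1^2+x_2^2+x_3^2)x_4+x_1^3+x_2^3+x_3^3$ in $\P^3$; a direct partial-derivative computation shows this cubic surface has a unique node at $[0{:}0{:}0{:}1]$. I would blow up the origin to obtain $g \colon Y \to X$ with exceptional divisor $F$ equal to that cubic surface, and then resolve the single remaining node of the strict transform by a further blow-up $h \colon Z \to Y$, producing a smooth $Z$ and a second exceptional divisor $E$. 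The candidate valuation is $v=\val_E$.

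To prove that $v$ is essential in the algebraic category, I would (i) verify that $X$ is $\Q$-factorial in the Zariski topology, by checking that the relevant hypersurface has trivial class group in a sufficiently small Zariski neighborhood of the origin (this is the analogue of the facts recorded in Example~\ref{eg:small} and references therein), and (ii) compare discrepancies of $E$ and $F$, arguing that on any resolution $f'\colon Y' \to X$ the center of $v$ must be a component of $(f')^{-1}(X_\sing)$. Combined with Lemma~\ref{l:small}, algebraic $\Q$-factoriality prevents the small contraction of any curve on $Z$ from descending to a scheme-theoretic modification of $X$, so one cannot contract $E$ while staying in the algebraic category.

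To prove that $v$ is \emph{not} in the image of the Nash map, I would give two independent arguments, as the paper advertises. The first is a wedge argument via the Ishii--Koll\'ar lemma of Section~\ref{s:IK}: on $E$ one finds a covering family of rational curves $L$ with $H^1(L,\mathcal{N}_{L/E})=0$, and one checks that the image of any such $L$ on $Y$ can be freely slid along $F$ away from the node. This shows that $f_\infty(\pi_Z^{-1}(E))$ is contained in the closure of $f_\infty(\pi_Y^{-1}(F^\circ))$ with $F^\circ := F \cap Y_\reg$, so $v$ cannot be the generic valuation of any Nash component. The second, softer, argument observes that the node of $F$ on $Y$ admits an analytic small resolution as in Example~\ref{eg:small}, giving an analytic resolution of $X^\an$ on which the center of $v$ has codimension $\ge 2$; hence $v$ is not essential analytically, and by the commutative diagram of Section~\ref{s:analytic} the image of the Nash map is the same in both categories, so $v$ is not in the algebraic Nash image either.

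The main obstacle I expect is balancing the two factoriality requirements: one needs $X$ to be $\Q$-factorial in the Zariski topology (to force $v$ to be essential), while simultaneously the point at the node of $F$ must fail to be $\Q$-factorial in the analytic topology (to enable the sliding/small-resolution argument). Verifying the Zariski $\Q$-factoriality for this explicit non-isolated-looking hypersurface, and carrying out the normal-bundle and wedge calculation precisely enough for the Ishii--Koll\'ar lemma, is where the real work lies. For the second equation in the excerpt, the same framework applies, but the analytic small resolution no longer exists, so one must replace the analytic/sliding step by a direct deformation analysis of the family of arcs, using the degeneration from the first equation to transport the failure of surjectivity across both categories.
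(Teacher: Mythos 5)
Your overall blueprint is the right one and closely follows the paper: blow up twice, use a discrepancy comparison plus factoriality to show $\val_E$ is essential algebraically, and use the Ishii--Koll\'ar wedge construction or the analytic small resolution to show it is not in the Nash image. However, there is a concrete misassignment of the key factoriality hypothesis. You repeatedly state that $X$ should be $\Q$-factorial in the Zariski topology, but the property actually used (and verified in the paper via the fact that a cubic threefold in $\P^4$ with one node is factorial) is $\Q$-factoriality of the first blow-up $Y=\Bl_O X$, not of $X$. This is not cosmetic: if $\val_E$ were inessential, its center $C$ on some resolution $p\colon X'\to X$ would be a curve strictly contained in the strict transform $F'$ of $F$; since $F'$ is itself a $p$-exceptional divisor of codimension one, $C$ is not an irreducible component of $\Ex(p)$, so Lemma~\ref{l:small} applied to $p$ gives nothing. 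One must first use the numerical condition $\val_E(\fm_O)=1$ (which your sketch does not invoke) to write $\fm_O\cdot\O_{X'}=\fa\cdot\O_{X'}(-F')$ with $\fa$ not vanishing along $C$, principalize $\fa$, and factor $p$ through a morphism $h\colon X'\to Y$; then $C$ becomes a one-dimensional irreducible component of $\Ex(h)$, and only at that point does $\Q$-factoriality of $Y$, via Lemma~\ref{l:small}, produce the contradiction. The same slip appears in your ``main obstacle'' paragraph: the tension to be navigated is between Zariski $\Q$-factoriality of $Y$ and failure of analytic $\Q$-factoriality of $Y^\an$ at the node $P$, not a dichotomy located on $X$.

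A smaller point: the wedge/sliding step should be recorded as the inclusion $g_\infty\big(\p_Z^{-1}(E)\big)\subset\ov{\p_Y^{-1}(F^\o)}$ inside $J_\infty(Y)$, which is strictly stronger than the corresponding inclusion after pushing forward to $J_\infty(X)$, and it is this stronger form that is actually needed when you pass to the degenerate second equation (the paper flags exactly this in Remark~\ref{r:stronger}). Also note that $f_\infty$ is defined on $J_\infty(Y)$, so your expression $f_\infty(\p_Z^{-1}(E))$ should read $(f\o g)_\infty(\p_Z^{-1}(E))$.
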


The reason why in the examples given by Ishii and Koll\'ar
one needs to assume that the dimension 
is at least 4 is that surfaces that are covered by
rational curves are automatically birationally ruled. To give examples in dimension 3, 
we follow a strategy similar to that of Ishii and Koll\'ar, and consider
a 3-dimensional isolated singularity $O \in X$ that
is resolved by a sequence of two blow-ups $Z \to Y \to X$. 
Using this approach, we will construct two examples that, in spite of
being deformations of each other, present different features.

The first example works in the category of algebraic varieties but not in the analytic setting. 
In this example the exceptional divisor $E$
is covered by lines with zero first cohomology of the normal bundle, 
so that its valuation is not in the Nash correspondence; the fact that the valuation 
is essential follows from reasons of 
discrepancies of the two exceptional divisors and 
factoriality (in the Zariski topology) of the first blow-up $Y$. 
The argument on discrepancies 
can in fact be used to give counter-examples in any dimension $\ge 4$
without relying on deep results about certain varieties not being birationally ruled.

The second example is obtained as a degeneration of the first one, 
and works in both categories. 
The degeneration is used to reduce to a case where $Y^\an$ is 
factorial in the analytic topology.
The fact that the valuation associated to $E$ is not in the Nash correspondence
is deduced from the first example, 
by studying the deformation of the associated family of arcs.

\section{First example}
\label{s:eg}

\begin{thm}
\label{t:eg1}
The hypersurface in $\A^4 = \Spec\C[x_1,x_2,x_3,x_4]$ defined by the equation
\[
(x_1^2 + x_2^2 + x_3^2)x_4 + x_1^3 + x_2^3 + x_3^3 + x_4^5 + x_4^6 = 0
\]
gives a counter-example to the Nash problem in the category of algebraic varieties
but not in the category of analytic spaces. 
\end{thm}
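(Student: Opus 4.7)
The plan is to resolve the unique singular point $O$ of $X$ by two successive blow-ups, identify the two resulting exceptional divisors explicitly, and then argue category-by-category about essentiality and the image of the Nash map.

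First, since the defining equation has multiplicity three at the origin, blowing up $f_1\colon Y\to X$ at $O$ extracts an exceptional divisor $F\subset Y$ equal to the projectivized tangent cone, namely the cubic surface $(x_1^2+x_2^2+x_3^2)x_4+x_1^3+x_2^3+x_3^3=0$ in $\P^3$. A direct inspection of partial derivatives shows that $F$ has a single singularity, at $[0\colon 0\colon 0\colon 1]$, of ordinary double point type; in the affine chart $x_i=x_4y_i$ the threefold $Y$ has local equation $y_1^2+y_2^2+y_3^2+x_4^2+(\text{higher order})=0$, exhibiting a unique three-dimensional ordinary double point of $Y$ above $[0\colon 0\colon 0\colon 1]$. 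Blowing up this node yields $f_2\colon Z\to Y$ with new exceptional divisor $E\cong\P^1\times\P^1$, and $Z$ is smooth. Adjunction on the ambient blow-ups gives the discrepancies $a(F/X)=0$ and $a(E/X)=1$.

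Next, I would show that $\val_E$ lies outside the image of the Nash map using the criterion of Ishii and Koll\'ar recalled in Section~\ref{s:IK}. Each ruling of $E\cong\P^1\times\P^1$ covers $E$ by rational curves $L$ with trivial normal bundle $\cN_{L/E}\cong\cO_L$, so $H^1(L,\cN_{L/E})=0$. The quoted argument then shows that every arc on $Z$ of contact order one with $E$ can be deformed so that its image on $Y$ slides off the image of $E$ along $F$; hence the image of $\p_Z^{-1}(E)$ in $J_\infty(X)$ is contained in the closure of the image of $\p_Y^{-1}(F^\circ)$, with $F^\circ=F\cap Y_\reg$, and is therefore not dense in any Nash component of $\p_X^{-1}(O)$.

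To complete the algebraic counter-example I would then show that $\val_E$ is essential in the Zariski topology. Since the resolution $Z\to X$ has only (the strict transform of) $F$ and $E$ as exceptional divisors, essentiality of $\val_E$ amounts to showing that $E$ is extracted by every algebraic resolution of $X$. I would verify first that $Y$ is $\Q$-factorial in the Zariski topology, by computing $\mathrm{Cl}(Y)$ and showing that the local Weil divisor at the node which witnesses the analytic defect of factoriality does not extend to a global Weil divisor on $Y$; I expect this to be the main technical obstacle. Granted this, Lemma~\ref{l:small} rules out any algebraic small resolution of the node on $Y$, so every algebraic resolution of $X$ must extract a divisor above the node. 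Since $E$ is the unique divisor of discrepancy one over a three-dimensional node, a standard argument on discrepancies of terminal singularities then forces every such resolution to dominate $Z$ and hence to extract $E$.

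Finally, for the analytic assertion, the three-dimensional node of $Y^\an$ admits two small analytic resolutions $Y^\pm\to Y^\an$ by Example~\ref{eg:small}, and the composition $Y^\pm\to X^\an$ is an analytic resolution. Because $F$ is Cartier at the node (cut out locally by $x_4=0$), the pullback of $F$ to $Y^\pm$ is a divisor $F^+$ which contains the exceptional $\P^1$ of the flop; hence $f^{-1}(O)=F^+$ as a set, and the center of $\val_E$ on $Y^\pm$, being this $\P^1\subsetneq F^+$, fails to be an irreducible component of $f^{-1}(O)$. Thus $\val_E$ is not essential in the analytic category. Combined with the fact that $\val_F$ is evidently essential and lies in the Nash image, this establishes both halves of the theorem.
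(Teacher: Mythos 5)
Your resolution structure, discrepancy computations, use of the Ishii--Koll\'ar wedge criterion to show $\val_E$ is not in the Nash image, and the analytic non-essentiality argument via the small resolution $Y^{\pm}\to Y^{\an}$ all match the paper. In particular, the paper also uses the wedge argument of \cite[Lemma~4.2]{IK} in Remark~\ref{r:stronger}, though its primary route is shorter: once $\val_E$ is known to be analytically inessential over $X^{\an}$, it automatically fails to be in the image of the Nash map in either category. Either route is fine.

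However, the algebraic essentiality argument, which is the heart of the theorem, has two gaps as written. First, factoriality of $Y$ in the Zariski topology: you correctly flag it as the technical crux and propose to compute $\Cl(Y)$ directly, but you do not carry this out. The paper instead observes that the closure of $Y\cap U$ in $\P^4$ is a cubic threefold with a single ordinary double point, which is factorial in the Zariski topology by \cite{Cy,CDG}; this transfers immediately to $Y$. Second, and more substantively, the step ``Lemma~\ref{l:small} rules out any algebraic small resolution of the node on $Y$, so every algebraic resolution of $X$ must extract a divisor above the node \dots\ forces every such resolution to dominate $Z$'' does not go through as stated. A resolution $p\colon X'\to X$ carries no a priori morphism to $Y$, so non-existence of small resolutions of $Y$ says nothing directly about $X'$; and it is certainly false that every resolution of $X$ dominates $Z$. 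The paper's Lemma~\ref{l:essential} supplies the missing mechanism: assuming the center $C$ of $\val_E$ on $X'$ is strictly contained in a component of $\Ex(p)$, a chain of discrepancy inequalities (using that $X$ is canonical and $Y$ terminal, so $F$ is the \emph{unique} exceptional divisor over $X$ of discrepancy $0$ --- note this is the relevant uniqueness, not uniqueness of $E$ among discrepancy-$1$ divisors) forces $C$ to be a curve lying only in the proper transform $F'$ of $F$; then the hypothesis $\val_E(\fm_O)=1$ gives $p^{-1}\fm_O\cdot\O_{X'}=\fa\cdot\O_{X'}(-F')$ with $\fa$ not vanishing along $C$, and after blowing up $\fa$ the universal property of the blow-up produces a morphism $h\colon X'\to Y$ for which $C$ is a one-dimensional irreducible component of $\Ex(h)$, contradicting $\Q$-factoriality via Lemma~\ref{l:small}. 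Without this factoring step your argument does not reach the contradiction, so you should incorporate it (or an equivalent) to close the essentiality proof.
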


\begin{proof}
This hypersurface, which we shall denote by $X$, has an isolated singularity of 
multiplicity 3 at $O = (0,0,0,0) \in \A^4$. 
The blow-up of the maximal ideal
\[
f\colon Y = \Bl_OX \to X
\] 
extracts one exceptional divisor $F$, 
given by the equation $(x_1^2 + x_2^2 + x_3^2)x_4 + x_1^3 + x_2^3 + x_3^3 =0$
in the exceptional divisor $\P^3 = \Proj \C[x_1,x_2,x_3,x_4]$ of $\Bl_O\A^4 \to \A^4$.
One can check that the point $P = (0:0:0:1) \in \P^3$ is the only singular point of $F$. 
In the local chart $U = \Spec\C[u_1,u_2,u_3,x_4] \subset \Bl_0\A^4$
where $u_i = x_i/x_4$, we have $P = (0,0,0,0)$, and $Y$ is defined by 
\[
u_1^2 + u_2^2 + u_3^2 + x_4^2 + u_1^3 + u_2^3 + u_3^3 + x_4^3 = 0.
\]
Then $Y$ has an ordinary double point 
at $P$, and $P$ is the only singular point of $Y$.
The closure of $Y \cap U$ in $\P^4 = \Proj \C[u_0,u_1,u_2,u_3,x_4]$ is a cubic threefold
with only one ordinary double point as singularity, and is thus
factorial in the Zariski topology (see \cite{Cy,CDG}). 
Therefore $Y$ is factorial in the Zariski topology.
The blow-up
\[
g\colon Z = \Bl_PY \to Y
\]
extracts one exceptional divisor $E$ which is a smooth quadric surface, 
defined by $u_1^2 + u_2^2 + u_3^2 + x_4^2= 0$, in the exceptional divisor 
$\P^3 = \Proj \C[u_1,u_2,u_3,x_4]$ of $\Bl_PU \to U$.
We claim that $E$ gives an example of an essential divisorial valuation on $X$ that
is not in the image of the Nash map.

We start by showing that $E$ defines an essential divisorial valuation on $X$
(in the category of algebraic varieties).
This will follow by a discrepancy computation. 
For a prime divisor $D$ on a normal birational model $V \to X$ we denote by
$k_D(X) := \ord_D(K_{V/X})$ the \emph{discrepancy} of $D$ over $X$. 
Here $K_{V/X}$ is the relative canonical divisor, which is 
well-defined because $X$ is normal and Gorenstein (being a hypersurface).
Recall that the discrepancy only depends on the valuation $\val_D$,
so that if $V' \to X$ is a different normal birational model
and the proper transform of $D$ on $V'$ is a divisor $D'$, then 
$k_D(X) = k_{D'}(X)$. In particular, the symbol $k_D(X)$ is defined for a prime divisor
$D$ on any normal model $V$ birational to $X$. 

The discrepancy of a divisor extracted by blowing up a hypersurface singularity
can be computed by blowing up the smooth ambient space and using the adjunction formula. 
In our situation, we have 
\[
k_F(X) = 0 \and k_E(X) = 1.
\]
Bearing in mind that $Z$ is both a resolution of $X$ 
and a resolution of $Y$, we see that
$O \in X$ is a canonical singularity and $Y$ has terminal singularities. 
Denoting by $\fm_O \subset \O_X$ the maximal ideal of $O$, 
one can check from the defining equation that
\[
\val_E(\fm_O) = 1.
\]
Then the fact that $\val_E$ is essential
is a consequence of the following general lemma. 

\begin{lem}
\label{l:essential}
Let $X$ be a 3-dimensional algebraic variety having an isolated canonical 
singularity at a point $O$. 
Suppose that the blow-up of $X$ at $O$ is $\Q$-factorial (in the Zariski topology)
and has terminal singularities, 
and that the (reduced) exceptional divisor $F$ of the blow-up is irreducible.
Then every divisorial valuation $\val_E$ over $X$ 
such that $k_E(X) = 1$ and $\val_E(\fm_O) = 1$ is an essential valuation
(in the category of algebraic varieties).
\end{lem}

\begin{proof}
Suppose by contradiction that $\val_E$ is not essential. Then there is a resolution
of singularities
\[
p \colon X' \to X
\]
such that the center $C \subset X'$ of $\val_E$ is
properly contained in an irreducible component of the exceptional locus $\Ex(p)$.
Since the singularity of $X$ is canonical, every $p$-exceptional divisor 
has nonnegative discrepancy, and therefore 
the relative canonical divisor $K_{X'/X}$ is effective. 
Hence, if $q \colon V \to X$ is another
resolution which factor through $p$, and $r \colon V \to X'$ is the induced map, 
then $K_{V/X} = K_{V/X'} + r^* K_{X'/X} \ge K_{V/X'}$.
This implies that $k_E(X) \ge k_E(X')$. 
By the chain of inequalities
\[
1 = k_E(X) \ge k_E(X') \ge \codim_{X'}(C) - 1 \ge 1
\]
we conclude that $C$ is a curve and $k_E(X) = k_E(X')$.
Then $C$ must be contained in some codimension
one component of $\Ex(p)$, and any such component must have
discrepancy zero over $X$. 
Since the blow-up $Y := \Bl_OX$ is has terminal singularities, 
$\val_F$ is the only divisorial valuation whose discrepancy can be zero.
Therefore $\val_F$ must have discrepancy zero, and 
$C$ is contained in exactly one exceptional
divisor $F'$ of $p$, equal to the proper transform of $F$
(that is, such that $\val_{F'} = \val_F$). 

The assumption that $\val_E(\fm_O) = 1$
implies that $p^{-1}\fm_O \.\O_{X'} = \fa\.\O_{X'}(-F')$
where $\fa \subset \O_{X'}$ is an ideal sheaf whose vanishing locus 
does not contain $C$. After further blowing up $\fa$, we may  
assume without loss of generality that $\fa$ is locally principal: this reduction step is
allowed because it does not
affect the fact that the center of $\val_E$ is a curve contained in the
proper transform of $F$ and in no other divisor that is exceptional over $X$. 
Then, by the universal property of the blow-up, $p$ factors through a morphism 
\[
h\colon X' \to Y.
\]

Note that $h(C) = P$. Since $F'$ is the only $p$-exceptional divisor containing $C$,
and $h(F') = F$, the curve $C$ is not contained in any $h$-exceptional
divisor. Therefore $C$ is an irreducible component of $\Ex(h)$. 
By Lemma~\ref{l:small}, this contradicts the fact that $Y$ is $\Q$-factorial. 
We conclude that $\val_E$ is an essential valuation on $X$.
\end{proof}

The reason why the valuation is not essential in the analytic category
is that the analytic space $Y^\an$ associated to $Y$ 
admits small resolutions in the analytic category (cf.\ Example~\ref{eg:small}). 
Such models 
are proper over $X^\an$ and thus give analytic resolutions of singularities of $X^\an$.
If $Y' \to Y^\an$ is one of these small resolutions, then the exceptional 
curve $C' \subset Y'$ is equal to the center of $\val_E$, and is fully contained in the
proper transform of $F^\an$, which is an exceptional component over $X^\an$.

It remains to check that $\val_E$ is not in the Nash correspondence.
This is immediate from the fact that $\val_E$ is not analytically essential over $X^\an$,
since such property implies that the valuation cannot be in the image of the Nash map
(in either category). 
\end{proof}

\begin{rmk}
\label{r:stronger}
One can also see that $\val_E$ is not in the Nash correspondence directly, without 
passing to the analytic setting, by following the same arguments of \cite{IK}.
We briefly recall the argument. 

Let $\ff \colon \Spec\C[[t]] \to Z$
be any formal arc on $Z$ with contact order 1 along $E$, 
and let 
\[
\f \colon \Spec\C[[t]] \to Y
\] 
be its image on $Y$. 
Let $L \subset E \cong \P^1 \times \P^1$ 
be the line through $\ff(0)$ in one of the two rulings. 
Since $\cN_{L/E} \cong \O_{\P^1}$, we have 
$H^1(L,\cN_{L/E}) = 0$. Thus \cite[Lemma~4.2]{IK} 
(cf.~Lemma~\ref{l:IK} below) applies, and the arc $\f$ extends to 
a smooth wedge 
\[
\Phi \colon \Spec\C[[s,t]] \to Y.
\]
The arc $\f$ has base at $P$ but is not fully contained in $F$.
Since $F$ is a Cartier divisor, the pull-back $\Phi^*F$ is
a curve on $\Spec\C[[s,t]]$. By translating the arc 
$\{s=0\} \subset \Spec\C[[s,t]]$ to the generic point of this curve, 
we obtain an arc on $Y$ (over a one-dimensional function field)
with finite order contact along $F$ away from $P$, which specializes to $\f$. 
Letting $F^\o := F \cap Y_\reg = F \smallsetminus \{P\}$, this implies that 
$g_\infty\big(\p_Z^{-1}(E)\big)$ 
is contained in the closure of $\p_Y^{-1}(F^\o)$ in $J_\infty(Y)$,
and therefore $(f\o g)_\infty\big(\p_Z^{-1}(E)\big)$ is contained
in the closure of $f_\infty\big(\p_Y^{-1}(F^\o)\big)$ in $J_\infty(X)$.
This proves that $\val_E$ does not correspond to any irreducible component of 
the space of arcs in $X$ through $O$. 

It should be noted that the property that $g_\infty\big(\p_Z^{-1}(E)\big)$ is contained
in the closure of $\p_Y^{-1}(F^\o)$ is {\it a priori} a stronger property than 
having an inclusion of $(f\o g)_\infty\big(\p_Z^{-1}(E)\big)$ 
in the closure of $f_\infty\big(\p_Y^{-1}(F^\o)\big)$, and it cannot be deduced, for instance, 
by simply knowing that $\val_E$ is not essential, in the analytic category, over $X^\an$.
This stronger property will come useful in the study of the second example. 
\end{rmk}

\section{Second example}
\label{s:eg2}

\begin{thm}
\label{t:eg2}
The hypersurface in $\A^4 = \Spec\C[x_1,x_2,x_3,x_4]$ 
defined by the equation
\[
(x_2^2 + x_3^2)x_4 + x_1^3 + x_2^3 + x_3^3 + x_4^5 + x_4^6 = 0
\]
gives a counter-example to the Nash problem in both categories
of algebraic varieties and analytic spaces. 
\end{thm}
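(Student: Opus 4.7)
The plan is to follow the blueprint of Theorem~\ref{t:eg1}, with two key modifications. First, the intermediate variety $Y$ will have a different singularity type at $P$ --- specifically one that is $\Q$-factorial both in the Zariski and in the analytic topology --- which will make Lemma~\ref{l:essential} apply in the analytic category as well. Second, the failure of the Nash correspondence will be transferred from the first example by deforming the family of arcs associated to $\val_E$.

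I would begin with the resolution. The same partial-derivative calculation as in Theorem~\ref{t:eg1} shows that $X$ has an isolated triple point at $O$ and that $f\colon Y = \Bl_O X \to X$ extracts an irreducible exceptional divisor $F$ whose only singular point is $P = (0:0:0:1)$. In the chart $u_i = x_i/x_4$, $Y$ is defined by
\[
u_2^2 + u_3^2 + x_4^2 + u_1^3 + u_2^3 + u_3^3 + x_4^3 = 0.
\]
The decisive difference from the first example is that the quadratic part has rank three rather than four: by the splitting lemma applied to the non-degenerate quadratic form in $(u_2,u_3,x_4)$ and a change of variable in $u_1$, the singularity of $Y$ at $P$ is analytically the 3-dimensional $A_2$ singularity $v\bar v + x_4^2 + u_1^3 = 0$ (with $v = u_2 + iu_3$, $\bar v = u_2 - iu_3$). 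Since $x_4^2 + u_1^3$ is irreducible in $\C[[x_4,u_1]]$, the analytic local ring at $P$ is factorial, so $Y^\an$ is locally factorial at $P$; an argument analogous to the one in Theorem~\ref{t:eg1} applied to the projective cubic closure of the chart (or, equivalently, the injection of the class group of the Zariski local ring into that of its completion) shows that $Y$ is also $\Q$-factorial in the Zariski topology. Finally I would check that the second blow-up $g\colon Z = \Bl_P Y \to Y$ is smooth (the partial derivative with respect to $u_1$ at the apex of the tangent cone is nonzero) and extracts the exceptional divisor $E = V(u_2^2 + u_3^2 + x_4^2) \subset \P^3$, which is a quadric cone with apex at $(1:0:0:0)$.

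Next I would verify essentiality in both categories. Standard blow-up/adjunction computations in the ambient affine space give $k_F(X) = 0$, $k_E(X) = 1$, and $\val_E(\fm_O) = 1$. The proof of Lemma~\ref{l:essential} depends only on discrepancy inequalities, the $\Q$-factoriality of $\Bl_O X$, and Lemma~\ref{l:small}, all three of which are valid in both topologies for threefolds; hence the lemma applies to $X$ and to $X^\an$ separately, and $\val_E$ is essential in both categories.

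The remaining, most delicate step is to show that $\val_E$ is not in the image of the Nash map. Consider the family
\[
\cX = \{(tx_1^2 + x_2^2 + x_3^2)x_4 + x_1^3 + x_2^3 + x_3^3 + x_4^5 + x_4^6 = 0\} \subset \A^4 \times \A^1_t,
\]
so that $X_0 = X$ and $X_t$ for $t \ne 0$ becomes the hypersurface of Theorem~\ref{t:eg1} after rescaling $x_1$. Carrying out the two blow-ups relatively over $\A^1_t$ produces a flat family of resolutions $\cZ \to \cY \to \cX$ with relative exceptional divisors $\cF$ and $\cE$ whose fibers over $t$ coincide with $F_t, E_t$. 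For every $t \ne 0$, Remark~\ref{r:stronger} provides a wedge $\Phi_t$ on $Y_t$ whose central arc has contact order one with $E_t$ and whose generic $s$-slice lies in $\p_{Y_t}^{-1}(F_t^\o)$; equivalently, $g_{t,\infty}(\p_{Z_t}^{-1}(E_t)) \subset \overline{\p_{Y_t}^{-1}(F_t^\o)}$ in $J_\infty(Y_t)$. I would organize these wedges into a single family on $\cY$ parametrized by $(s,t)$ and pass to the limit $t \to 0$ to obtain the analogous inclusion for $Y_0$; the same argument as in Remark~\ref{r:stronger} would then rule $\val_E$ out of the Nash correspondence. The main obstacle is precisely this limiting procedure: one must ensure that the limit wedge at $t=0$ remains smooth, that its central arc retains contact order one with $E_0$ rather than sliding into the apex of the quadric cone, and that its generic slices continue to meet $F_0^\o$. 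I expect this to require a careful choice of the initial arc on $Z_0$ (with base point in the smooth locus of $E_0$), combined with a Greenberg-type approximation on the relative jet schemes $J_m(\cY/\A^1_t)$ that promotes the formal wedges of Remark~\ref{r:stronger} to a single wedge over $\cY$ whose restriction to each fiber $Y_t$ has the required properties.
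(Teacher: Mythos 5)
Your outline of the geometry (the two blow-ups, the $cA_2$ singularity of $Y$ at $P$, the quadric cone $E$, the discrepancy/multiplicity computations $k_F(X)=0$, $k_E(X)=1$, $\val_E(\fm_O)=1$) matches the paper. Your argument for local analytic factoriality of $Y^\an$ at $P$ is in fact cleaner than the paper's: the paper goes through $\Pic(E^\an)$, the normal bundle $\cO_{E^\an}(-1)$, and the extension argument of \cite[Proposition~(12.1.4)]{KM92}, whereas you invoke the splitting lemma to reduce to $v\bar v + x_4^2 + u_1^3(\text{unit}) = 0$ and then use the classical fact that the complete local ring of $\{xy=g(z,w)\}$ is a UFD whenever $g$ is irreducible (here $g$ is a cusp). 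This is a legitimate and arguably more elementary route to the same conclusion, and the essentiality of $\val_E$ in both categories then follows just as in the paper.

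The genuine gap is in the last step — showing $\val_E$ is not in the image of the Nash map — and you have in fact identified it yourself. Your plan is to take, for each $\lambda\neq 0$, the Ishii--Koll\'ar wedge of Remark~\ref{r:stronger} on $Y_\lambda$ and pass to the limit $\lambda\to 0$, but you acknowledge you do not have an argument that a limit wedge exists, is smooth, keeps contact order $1$ with $E_0$, and has generic slice meeting $F_0^\circ$. These concerns are real and not merely technical: Lemma~\ref{l:IK} hinges on the vanishing of $H^1(L,\cN_{L/E})$ for a line $L$ through $\ff(0)$ in a \emph{ruling} of $E\cong\P^1\times\P^1$, and over $\lambda=0$ the two rulings collapse to the single ruling of a quadric cone passing through the singular vertex — the hypothesis of the Ishii--Koll\'ar lemma is not available in the form you need, and the ``Greenberg-type approximation over the relative jet schemes'' you invoke is left as a hope rather than a proof. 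The paper avoids this difficulty entirely: it does not degenerate wedges but instead proves a closure-compatibility statement (Lemma~\ref{l:closure-restr}) saying that the fiber over $\lambda=0$ of $\ov{\p_{\Y/\A^1}^{-1}(\F^\circ)}$ is exactly $\ov{\p_Y^{-1}(F^\circ)}$. The proof of that lemma is entirely finite-dimensional: one truncates to level $m$, uses that the general fiber of $\cS_m$ has the expected dimension $3(m+1)-1$, and shows that any extra irreducible component $T$ in the special fiber over $P$ would, via \cite[Theorem~A]{ELM04}, force the existence of a divisor over $\~\A^4=\Bl_O\A^4$ with log discrepancy $\le 1$ over $(\~\A^4,P_Y^{(n)})$, contradicting the computation $\mld_P(\~\A^4,Y)=2$. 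That is a substantially different mechanism from what you sketch, and your proof as written does not close this gap.
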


\begin{proof}
Consider the family of
affine hypersurfaces in $\A^4 = \Spec\C[x_1,x_2,x_3,x_4]$ defined by
\[
(\lambda x_1^2 + x_2^2 + x_3^2)x_4 + x_1^3 + x_2^3 + x_3^3 + x_4^5 + x_4^6 = 0, 
\quad \lambda \in \C.
\]
Let $\X$ denote the total space of the family, which we view as a scheme over $\A^1$.
The hypersurface studied in the previous section appears in this family when $\lambda =1$; 
the same conclusions we draw for $\X_1$ 
hold however for $\X_\lambda$ for every $\lambda \ne 0$.
Here we focus on the central fiber $\X_0$ of the deformation.

Consider the blow-up 
\[
f\colon \Y = \Bl_\cO\X \to \X,
\] 
where $\cO = \{O\} \times \A^1$, and let $\cF$ be its exceptional divisor.
Consider then the section $\cP = \{P\} \times \A^1 \subset \Y$, where $P = (0,0,0,0)$ 
in the local chart $U = \Spec\C[u_1,u_2,u_3,x_4] \subset \Bl_O\A^4$
with $u_i = x_i/x_4$, and let
\[
g\colon \Z = \Bl_\cP\Y \to \Y
\]
be its blow-up, with exceptional divisor $\cE$.
Since the multiplicity of $\X_\lambda$ at $\O_\lambda$ is the same for all $\lambda$, 
each fiber $\Y_\lambda$ is the blow-up of $\X_\lambda$ at $\O_\lambda$. 
Similarly, the multiplicity of $\Y_\lambda$ at $\cP_\lambda$ is independent of $\lambda$, 
and each $\Z_\lambda$ is the blow-up of $\Y_\lambda$ at $\cP_\lambda$. 

For simplicity, throughout this section we shall denote 
\[
X := \X_0, \quad Y := \Y_0, \quad Z := \Z_0, \quad F := \cF_0, \quad E := \cE_0.
\]
Then $Y$ is defined by 
\[
u_2^2 + u_3^2 + x_4^2 + u_1^3 + u_2^3 + u_3^3 + x_4^3 = 0
\]
in the chart $U$, and $P$ is the only singular point
of $Y$. The exceptional divisor $E$ extracted by the blow-up
$Z = \Bl_PY \to Y$ is the quadric cone of equation $u_2^2 + u_3^2 + x_4^2= 0$ 
and vertex at $Q = (1:0:0:0)$ in $\P^3 = \Proj \C[u_1,u_2,u_3,x_4]$.
In the local chart $V = \Spec\C[u_1,v_2,v_3,v_4] \subset \Bl_P U$
where $v_i = u_i/u_1$ (here we set $u_4=x_4$), we have $Q = (0,0,0,0)$, and $Z$ is defined by 
\[
u_1 + v_2^2 + v_3^2 + v_4^2 + u_1(v_2^3 + v_3^3 + v_4^3) = 0.
\]
This shows that $Z$ is smooth at $Q$. 
Since there is no singular point in any of the other charts, $Z$ is a resolution of $X$.
Note, in particular, that every fiber $Z_\lambda$, for $\lambda \in \A^1$, is regular, 
and $\Z$ is flat over $\A^1$. Therefore $\Z$ is smooth over $\A^1$.

\begin{lem}
$\val_E$ is essential both in the algebraic category and in the analytic category.
\end{lem}

\begin{proof}
The fact that $\val_E$ is essential in 
the algebraic category follows by the same arguments of the previous section. 
The proof that the valuation is
also essential in the analytic category is similar, and only 
requires some adaptation.

First, we claim that $Y^\an$ is locally analytically factorial at $P$.
To see this, let $Y^\o \subset Y^\an$ 
be an arbitrary Euclidean open neighborhood of $P$, 
and let $Z^\o \to Y^\o$ denote the restriction of $Z^\an \to Y^\an$.
Let $D$ be any divisor on $Y^\o$, and let $D'$ be its proper transform in $Z^\o$.
Recall that $E^\an$ is isomorphic to a singular quadric hypersurface in $\P^3$
and its Picard group is generated by the hyperplane class $\O_{E^\an}(1)$. 
The normal bundle of $E^\an$ in $Z^\o$ is isomorphic to $\O_{E^\an}(-1)$ 
(this can be checked by computing the normal
bundle of the exceptional divisor extracted by the blow-up of $\Bl_O\A^4$ at $P$
and restricting to $Z$). 
Then $\O_{Z^\o}(D' + mE^\an)$ restricts to the trivial line bundle on $E^\an$
for some positive integer $m$. 
Since $Z^\o \to Y^\o$ is a rational resolution, it follows by the same
arguments in the proof of \cite[Proposition~(12.1.4)]{KM92} 
that $\O_{Z^\o}(D' + mE^\an)$ restricts to the trivial bundle
on the inverse image of a small contractible 
neighborhood of $P \in Y^\o$. This implies that $\O_{Z^\o}(D' + mE^\an)$ 
is the pull-back of a line bundle on $Y^\o$, and therefore
that $D$ is Cartier. 

Suppose that $\val_E$ is not essential over $X^\an$. 
Let $p \colon X' \to X^\an$ be a resolution of singularities
such that the center $C$ of $\val_E$ is
strictly contained in some irreducible component of the exceptional locus $\Ex(p)$. 

The computations of discrepancies are the same as in the algebraic setting, and
the same argument as in the proof of Lemma~\ref{l:essential}
shows that $C$ is a curve, the proper transform $F'$ of $F^\an$ is the
only component of $\Ex(p)$ containing $C$, and writing
$p^{-1}\fm_O \.\O_{X'} = \fa \. \O_{X'}(-F')$, 
$C$ is not contained in the vanishing locus of $\fa$
(since $C$ is actually projective, $\fa$ can only vanish at
finitely many points on $C$). 
The ideal sheaf $p^{-1}\fm_O \.\O_{X'}$ (viewed here as a sheaf of ideals in the 
analytic topology) is finitely generated by global holomorphic functions on $X'$, 
and so $\fa$ is finitely generated by global sections of 
$\O_{X'}(F')$. We can thus blow up $\fa$ and further resolve the
singularities. After performing this reduction, we can assume without loss of
generality that $p^{-1}\fm_O \.\O_{X'}$ is an invertible sheaf. 
The universal property of the blow-up
can be applied to our setting and therefore $p$ factors through
an analytic map $h \colon X' \to Y^\an$. 
Just like in the algebraic case, $C$ is a one-dimensional irreducible component of 
the exceptional locus of $h$.
By Lemma~\ref{l:small}, this implies that $Y^\an$ is not $\Q$-factorial in 
the analytic topology. 

We get a contradiction, and therefore $\val_E$ must be essential 
in the analytic setting, too.
\end{proof}

To prove that $\val_E$ is not in the image of the Nash map,
we reduce to use what we already know about the first 
example, by looking at the family of 
arcs associated to $\val_E$ as limits of families of arcs on the nearby fibers $\cX_\lambda$. 
In order to formalize this, we work in the category of schemes. 
As explained in Section~\ref{s:analytic},  
the property of a divisorial valuation being in the 
image of the Nash map is independent of the category we work in, 
so it suffices to study the problem in the category of schemes. 

In the following, let for short 
\[
\F^\o := \F \smallsetminus \{\cP\} \and F^\o := F \smallsetminus \{P\}.
\]
Consider the images of the sets $\p_Z^{-1}(E)$ and $\p_Y^{-1}(F^\o)$ in $J_\infty(X)$. 
We need to check that the first image is contained in the closure of the latter.

The idea is simple: if, for $\lambda \ne 0$, one family of arcs is a 
specialization of the other family, then the same should hold, by degeneration, 
on the central fiber $X = \X_0$.
Specifically, we know by the previous example that, for $\lambda \ne 0$, 
$(f_\lambda\o g_\lambda)_\infty\big(\p_{\cZ_\lambda}^{-1}(\cE_\lambda)\big)$ 
is contained in the closure of the image of 
$(f_\lambda)_\infty\big(\p_{\cY_\lambda}^{-1}(\cF_\lambda^\o)\big)$. 
We would like to show that this property is preserved when 
we let $\lambda$ degenerate to $0$. 
For this to work, we need to know
that the closure of $(f_\lambda)_\infty\big(\p_{\cY_\lambda}^{-1}(\cF_\lambda^\o)\big)$
degenerates to the closure of $(f_0)_\infty\big(\p_Y^{-1}(F^\o)\big)$.
This is the key point that we need to prove. 

At least intuitively, there is a good reason why it should be expected. The point is that, 
for every $\lambda$, we have 
\[
\p_{\X_\lambda}^{-1}(\cO_\lambda) = 
\Big((f_\lambda\o g_\lambda)_\infty\big(\p_{\cZ_\lambda}^{-1}(\cE_\lambda)\big)\Big) \cup
\Big((f_\lambda)_\infty\big(\p_{\cY_\lambda}^{-1}(\cF_\lambda^\o)\big)\Big),
\]
and even though the two sets in the right hand side (or, more precisely, their closures)
have infinite dimension, their codimensions in $J_\infty(\X_\lambda)$
are finite. 
(These codimensions can be computed in terms of the Mather discrepancies of the two divisors,
see \cite[Theorem~3.9]{dFEI}.) 
For every $\lambda$, 
the codimension of $(f_\lambda\o g_\lambda)_\infty\big(\p_{\cZ_\lambda}^{-1}(\cE_\lambda)\big)$
is strictly larger than the codimension of
$(f_\lambda)_\infty\big(\p_{\cY_\lambda}^{-1}(\cF_\lambda^\o)\big)$. 
If we knew that when we let $(f_\lambda)_\infty\big(\p_{\cY_\lambda}^{-1}(\cF_\lambda^\o)\big)$
degenerate, the codimension of each irreducible component over $\lambda = 0$
cannot exceed the codimension of a general fiber of the deformation, 
then we could easily conclude that the central fiber is irreducible
and must coincide with the closure of $(f_0)_\infty\big(\p_Y^{-1}(F^\o)\big)$, 
and that $(f_0\o g_0)_\infty\big(\p_Z^{-1}(E)\big)$ must be contained in it. 
It is however unclear to us
whether such semi-continuity property holds in the infinite dimensional setting. 

For this reason, we reduce to work 
in the finite dimensional setting, by taking images at the finite jet levels.
A technical difficulty arises:
due to the singularity of $X$, the fibers $\p_{X,m}^{-1}(O)$
(where $\t_{X,m} \colon J_m(X) \to X$ denote the canonical projection)
have larger than expected dimensions, and the maps
$\p_X^{-1}(O) \to \t_{X,m}^{-1}(O)$ are not dominant. 
So, we work over $\Y$, which is less singular, and use the 
inclusion of $(g_\lambda)_\infty\big(\p_{\cZ_\lambda}^{-1}(\cE_\lambda)\big)$ 
in the closure of $\p_{\cY_\lambda}^{-1}(\cF_\lambda^\o)$
(which holds for $\lambda \ne 0$, see Remark~\ref{r:stronger}). 
This will give us enough room to compensate for the extra dimensions
in our computations.

Closing this digression, 
let us consider the relative arc space $J_\infty(\X/\A^1)$
and the relative jet spaces $J_m(\X/\A^1)$ of $\X$ over $\A^1$, which
respectively parameterize commutative diagrams
\[
\xymatrix{
\Spec\C[[t]] \ar[d]\ar[r]^(.63)\f &\X \ar[d] 
&& \Spec\C[t]/(t^{m+1}) \ar[d]\ar[r]^(.7)\g &\X \ar[d]\\
\Spec \C \ar[r] &\A^1 && \Spec \C \ar[r] &\A^1
}.
\] 
We denote the canonical projections by
\[
\p_{\X/\A^1,m} \colon J_\infty(\X/\A^1) \to J_m(\X/\A^1)
\and \p_{\X/\A^1} \colon J_\infty(\X/\A^1) \to \X.
\]
We define similar spaces and maps for $\Y$ and $\Z$.
Note that for every $\lambda \in \A^1$ there are natural 
identifications $J_\infty(\X/\A^1)_\lambda = J_\infty(\X_\lambda)$
and $J_m(\X/\A^1)_\lambda = J_m(\X_\lambda)$, and similarly for $\Y$ and $\Z$.

There is a commutative diagram
\[
\xymatrix{
J_\infty(\Z/\A^1) \ar[d]^{\p_{\Z/\A^1}} \ar[r]^{g_\infty} 
& J_\infty(\Y/\A^1) \ar[d]^{\p_{\Y/\A^1}} \ar[r]^{f_\infty} 
& J_\infty(\X/\A^1) \ar[d]^{\p_{\X/\A^1}} \\
\Z \ar[r]^g & \Y \ar[r]^g & \X
}
\]
where $g_\infty$ and $f_\infty$ are canonically induced by $g$ and $f$, and restrict
to each fiber over $\A^1$ to the corresponding maps $(g_\lambda)_\infty$
and $(f_\lambda)_\infty$.
We have
\[
\big(\p_{\Z/\A^1}^{-1}(\E)\big)_\lambda = \p_{\Z_\lambda}^{-1}(\E_\lambda)
\and
\big(\p_{\Y/\A^1}^{-1}(\F^\o)\big)_\lambda = \p_{\Y_\lambda}^{-1}(\F^\o_\lambda)
\fall \lambda.
\]
If $\lambda \ne 0$, then $(g_\lambda)_\infty\big(\p_{\Z_\lambda}^{-1}(\E_\lambda)\big)$
is contained in the closure of $\p_{\Y_\lambda}^{-1}(\F^\o_\lambda)$
(see Remark~\ref{r:stronger}), 
and since $\Z$ is smooth over $\A^1$, $\p_{\Z/\A^1}^{-1}(\E)$ is irreducible. 
Therefore $g_\infty\big(\p_{\Z/\A^1}^{-1}(\E)\big)$ is contained in
the closure of $\p_{\Y/\A^1}^{-1}(\F^\o)$.

\begin{lem}
\label{l:closure-restr}
The fiber over $0 \in \A^1$ of the closure of $\p_{\Y/\A^1}^{-1}(\F^\o)$
in $J_\infty(\Y/\A^1)$ is equal to the closure of $\p_Y^{-1}(F^\o)$
in $J_\infty(Y)$. That is:
\[
\Big(\ov{\p_{\Y/\A^1}^{-1}(\F^\o)}\Big)_0 = \ov{\p_Y^{-1}(F^\o)}.
\]
\end{lem}

\begin{proof}
For short, let
\[
\cS_m := \ov{\p_{\Y/\A^1,m}\big(\p_{\Y/\A^1}^{-1}(\F^\o)\big)}
\and
S_m := \ov{\p_{Y,m}\big(\p_Y^{-1}(F^\o)\big)}
\]
where the closures 
are taken in the respective jet spaces $J_m(\Y/\A^1)$ and $J_m(Y)$. 
By the definition of the inverse limit topology on arc spaces, we have
\[
\ov{\p_{\Y/\A^1}^{-1}(\F^\o)} 
= \bigcap_m \p_{\Y/\A^1,m}^{-1}(\cS_m)
\and
\ov{\p_Y^{-1}(F^\o)} = \bigcap_m \p_{Y,m}^{-1}(S_m),
\]
It is therefore enough to show that for every $m$ the fiber 
of $\cS_m$ over $0 \in \A^1$
is equal to $S_m$. The inclusion $S_m \subset (\cS_m)_0$ is clear, and we 
need to show the reverse inclusion.

Suppose by contradiction that $S_m \subsetneq (\cS_m)_0$, and
let $T$ be an irreducible component of $(\cS_m)_0$ that is not equal to $S_m$. 
Since $(\cS_m)_0$ agrees with $S_m$ over $Y_\reg$, 
$T$ must be contained in the fiber over $P \in Y$. 

For $\lambda \ne 0$, the set $(\cS_m)_\lambda$ is equal to the closure 
of $\t_{\Y_\lambda,m}^{-1}(\cF^\o_\lambda)$ where
$\t_{\Y_\lambda,m} \colon J_m(\Y_\lambda) \to \Y_\lambda$ is the canonical projection. 
Note that $\t_{\Y_\lambda,m}^{-1}(\cF^\o_\lambda)$ is an 
irreducible codimension 1 subset of $J_m((\Y_\lambda)_\reg)$, which is
$3(m+1)$-dimensional, hence it has dimension $3(m+1)-1$.
Therefore $(\cS_m)_\lambda$, for $\lambda \ne 0$, 
is irreducible and has dimension $3(m+1)-1$. 
Since $\cS_m \to \A^1$ is a surjective morphism from a variety, 
the dimension of $T$ is at least the dimension of a general fiber
over $\A^1$, and hence
\[
\dim T \ge 3(m+1) - 1.
\]

Consider the set
\[
\p_{\~\A^4,m}^{-1}(T) \subset J_\infty(\~\A^4)
\]
where we denote for short $\~\A^4 := \Bl_O\A^4$.
This set is contained in the fiber over $P \in \~\A^4$ and has
codimension $\le m+2$ in $J_\infty(\~\A^4)$.
Denote by $\cI_Y$ the ideal sheaf of $Y$
and by $\fm_P$ the maximal ideal of $P$ in $\~\A^4$, fix $n \ge m$, 
and let $P_Y^{(n)} := V(\cI_Y + \fm_P^{n+1}) \subset \~\A^4$ 
be the $n$-th neighborhod of $P$ in $Y$.
Since $J_m(P_Y^{(n)})$ and $J_m(Y)$ have the same fiber over $P$, we have
$T \subset J_m(P_Y^{(n)})$.
Then $\p_{\~\A^4,m}^{-1}(T)$ is contained in the \emph{contact locus}
\[
\p_{\~\A^4,m}^{-1}\big(J_m(P_Y^{(n)})\big) = 
\{\a \in J_\infty(\~\A^4) \mid \val_\a(\cI_Y + \fm_P^{n+1}) \ge m + 1\}.
\]
Let $C$ be an irreducible component of this contact locus 
that contains $\p_{\~\A^4,m}^{-1}(T)$. 
Note that $C$ lies over $P$ and has codimension $\le m+2$.
It follows from \cite[Theorem~A]{ELM04} that there is a prime exceptional divisor
$D$ over $\~\A^4$ with center $P$, and a positive integer $q$, 
such that 
\[
q\.\big(k_D(\~\A^4) + 1\big) = \codim\big(C,J_\infty(\~\A^4)\big) \le m+2
\]
and
\[
q \. \val_D(\cI_Y + \fm_P^{n+1}) = \val_C(\cI_Y + \fm_P^{n+1}) \ge m+1.
\]
Therefore the divisor $D$ has \emph{log discrepancy} 
\[
a_D(\~\A^4,P_Y^{(n)}) := k_D(\~\A^4) + 1 - \val_D(\cI_Y + \fm_P^{n+1}) \le 1
\]
over the pair $(\~\A^4,P_Y^{(n)})$.

This is however impossible because $P_Y^{(n)} \subset Y$ and
the pair $(\~\A^4,Y)$ has \emph{minimal log discrepancy} 2 at $P$, which can be checked 
as follows (we refer to \cite{Amb} for the definition and general properties of  
minimal log discrepancies).
Consider the sequence of two blow-ups $(\~\A^4)'' \to (\~\A^4)' \to \~\A^4$, where
the first blow-up is centered at the point $P$ (hence the proper transform
of $Y$ is equal to $Z$), and the second blow-up is centered at the point $Q$. 
The pull-back of $Y$ on $(\~\A^4)''$ is a simple normal crossing divisor,
and the log discrepancies of the two exceptional divisors over $(\~\A^4,Y)$ 
are 2 and 4, respectively. 
The minimal log discrepancy is just the minimum of these two numbers.

This completes the proof of the lemma.
\end{proof}

We can now finish the proof of the theorem. 
We saw that the image of $\p_{\Z/\A^1}^{-1}(\E)$ in $J_\infty(\Y/\A^1)$ being contained in
the closure of $\p_{\Y/\A^1}^{-1}(\F^\o)$.
By the lemma, if we restrict this inclusion 
to the fiber over $0 \in \A^1$ we then obtain that
the image of $\p_Z^{-1}(E)$ in $J_\infty(Y)$ is contained in the closure of $\p_Y^{-1}(F^\o)$.
Mapping down to $J_\infty(X)$, we conclude that 
the image of $\p_Z^{-1}(E)$ in $J_\infty(X)$ is contained in the closure of image of
$\p_Y^{-1}(F^\o)$. 
This completes the proof that $\val_E$ is not in the Nash correspondence.
\end{proof}

\section{On Ishii--Koll\'ar's smooth wedge construction}
\label{s:IK}

This section is devoted to a discussion of the following lemma due to Ishii and Koll\'ar.
The discussion given below 
provides a different viewpoint on this interesting property
which might lead to find more general formulations. 

While the lemma can be avoided in the proof of Theorem~\ref{t:eg1},  
it leads to a stronger property that is used in the proof of Theorem~\ref{t:eg2}. 
More precisely, in the notation of the proof of Theorem~\ref{t:eg1}, 
the lemma implies that 
$g_\infty\big(\p_Z^{-1}(E)\big)$ is contained
in the closure of $\p_Y^{-1}(F^\o)$. 
This property does not follows, formally, from
the inclusion of $(f\o g)_\infty\big(\p_Z^{-1}(E)\big)$ 
in the closure of $f_\infty\big(\p_Y^{-1}(F^\o)\big)$.

\begin{lem}[\protect{\cite[Lemma~4.2]{IK}}]
\label{l:IK}
Let $Y \subset \A^{n+1}$ be a hypersurface with an isolated singularity
at a point $P$, and suppose that the exceptional divisor $E$ of the blow-up 
\[
Z = \Bl_PY \to Y
\]
is a reduced, irreducible hypersurface in the exceptional 
divisor $\P^n$ of $\Bl_P\A^{n+1} \to \A^{n+1}$. 
Let $\ff \colon \Spec\C[[t]] \to Z$ be an arc with contact order 1 along $E$, 
and assume that there is a line $L \subset E \subset \P^n$
through $\ff(0)$ such that $H^1(L,\cN_{L/E}) = 0$. Then 
the image $\f \colon \Spec\C[[t]] \to Y$ of $\ff$ in $Y$ extends to 
a smooth wedge $\Phi \colon \Spec\C[[s,t]] \to Y$. 
\end{lem}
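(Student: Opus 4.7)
The plan is to construct the wedge first on $Z$ and then push it forward to $Y$ via the blow-up $g \colon Z \to Y$. Since $\ff$ has contact order one with $E$, the base point $\ff(0)$ lies at a smooth point of both $E$ and $Z$, and $\ff$ crosses $E$ transversally there. In local formal coordinates $(u_1,\dots,u_n)$ at $\ff(0)$ arranged so that $E = \{u_1 = 0\}$ and $L = \{u_1 = \cdots = u_{n-1} = 0\}$, the arc $\ff(t)$ takes the shape $(t, \psi_2(t), \dots, \psi_n(t))$, with $t$ the first coordinate by transversality.

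The vanishing $H^1(L, \cN_{L/E}) = 0$ implies that the Hilbert scheme of embedded deformations of $L$ in $E$ is smooth at $[L]$; since $L \simeq \P^1$, the splitting $\cN_{L/E} = \bigoplus \cO(a_i)$ has all $a_i \ge -1$, and in particular there is a global section $\sigma$ of $\cN_{L/E}$ that is nonzero at $\ff(0) \in L$. Integrating $\sigma$ over $\Spec\C[[s]]$ (using the smoothness of the Hilbert scheme) produces a formal one-parameter family $\{L_s\}$ of curves in $E$ with $L_0 = L$, sweeping out a smooth formal surface $\Sigma_E \subset E$ through $\ff(0)$ whose tangent direction transverse to $L$ at that point is $\sigma(\ff(0)) \in T_{\ff(0)} E$.

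The wedge on $Z$ is obtained by deforming $\ff$ in this normal direction. Set
\[
\~\Phi(s,t) = \big(t,\, \Psi_2(s,t),\, \dots,\, \Psi_n(s,t)\big),
\]
with $\Psi_i(0,t) = \psi_i(t)$ and with $\partial_s \~\Phi(0,0) = \sigma(\ff(0))$ (viewed in $T_{\ff(0)} Z$ via the inclusion $T_{\ff(0)} E \hookrightarrow T_{\ff(0)} Z$). Transversality of $\partial_t \ff|_0$ to $E$ combined with nonvanishing of $\sigma(\ff(0))$ makes the Jacobian of $\~\Phi$ at the origin have rank $2$, so $\~\Phi$ is a formal immersion, hence a smooth wedge. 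Composing with $g$ yields $\Phi := g \circ \~\Phi \colon \Spec\C[[s,t]] \to Y$, which restricts at $s = 0$ to $\f$; since $g$ is an isomorphism away from $E$ and $\~\Phi$ meets $E$ only along $\{t = 0\}$, the composition $\Phi$ is a smooth wedge on $Y$ extending $\f$.

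The main technical obstacle is producing the section $\sigma$ of $\cN_{L/E}$ with $\sigma(\ff(0)) \ne 0$, so that the deformed lines genuinely move off of $L$ and the resulting wedge does not collapse into $\ff$. The cohomological hypothesis $H^1(L, \cN_{L/E}) = 0$ gives unobstructedness of the deformation functor (smoothness of the Hilbert scheme), while the structure of bundles on $\P^1$ guarantees that the base-point condition at $\ff(0)$ can be imposed; the rest of the argument is a formal verification that the resulting rank-two Jacobian at the origin gives a smooth wedge whose image in $Y$ meets the singularity only at the single point $\f(0) = P$.
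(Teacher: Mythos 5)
Your construction breaks down at the final step, and the failure is precisely the point that the paper's formal-geometry argument is designed to handle. You build a smooth wedge $\widetilde\Phi\colon\Spec\C[[s,t]]\to Z$ on the blow-up, with $\widetilde\Phi(0,t)=\ff(t)$ and $\partial_s\widetilde\Phi(0,0)=\sigma(\ff(0))$, and then compose with $g\colon Z\to Y$. But $g$ contracts $E$ to the point $P$, and by your choice of coordinates the first component of $\widetilde\Phi$ is $t$, so the whole base-point curve $s\mapsto\widetilde\Phi(s,0)$ lies inside $E$. Hence $\Phi(s,0)=g(\widetilde\Phi(s,0))=P$ identically in $s$, and since $\sigma(\ff(0))\in T_{\ff(0)}E=\ker dg_{\ff(0)}$, we get $\partial_s\Phi(0,0)=dg_{\ff(0)}(\sigma(\ff(0)))=0$. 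The Jacobian of $\Phi=g\circ\widetilde\Phi$ at the origin therefore has rank $1$, not $2$: $\Phi$ is \emph{not} a smoothly embedded wedge (its scheme-theoretic fiber over $P$ contains the entire line $\{t=0\}$, whereas an embedded wedge would meet $P$ only at the origin). The content of the lemma is exactly this nondegeneracy, and it is what is used later (cf.\ Remark~\ref{r:stronger}, where one needs $\Phi^*F$ to have a generic point mapping away from $P$). Your closing justification that ``$g$ is an isomorphism away from $E$'' does not help, because the rank must be checked at the origin, which lies on $\{t=0\}\subset\widetilde\Phi^{-1}(E)$.

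The paper's remedy is to replace the ordinary wedge on $Z$ by a \emph{blown-up} wedge: a morphism from the formal neighborhood $L_M^{(\infty)}$ of a $(-1)$-curve (the formal blow-up of $\Spec\C[[s,t]]$ at the origin) into $Z$, mapping the exceptional $(-1)$-curve onto the entire line $L\subset E$, via an identification $L_N^{(\infty)}\cong L_Z^{(\infty)}$ of formal neighborhoods and the prescription $z=s/t$, $v_i=v_i(t)$. Composing with $g$ then contracts the $(-1)$-curve, so the map factors through the blow-down $L_M^{(\infty)}\to\Spec\C[[s,t]]$, and it is this factorization that yields a smoothly embedded $\Phi$. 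In other words, one must let the $s$-direction in the source wrap around the whole line $L$ rather than deform the single base point $\ff(0)$ inside $E$; deforming a single point inside $E$ produces a direction that $dg$ kills.
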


The proof given in \cite{IK} goes by constructing the wedge directly on $Y$, 
by lifting solutions modulo powers of $(s,t)$. 
Here we discuss an alternative approach by means of formal geometry:
the rough idea is to construct a blown-up wedge on $Z$
in such a way that the map induced by $Z \to Y$ is 
just the contraction of a $(-1)$-curve to a smooth wedge.
As explained below, our approach does not allow us to prove the
full strength of the lemma. 

Since $L \cong \P^1$, by Birkhoff--Grothendieck theorem $\cN_{L/E}$ decomposes 
as the direct sum of line bundles. The vanishing of the first cohomology of $\cN_{L/E}$ 
implies that each of these lines bundles has degree $\ge -1$, and
the fact that $\cN_{L/E}$ injects into $\cN_{L/\P^n} \cong \O_L(1)^{\oplus n-1}$ 
implies that all degrees are $\le 1$. Therefore we can write the decomposition as follows:
\[
\cN_{L/E} \cong \bigoplus_{i=2}^{n-1} \O_L(-a_i)  \where -1 \le a_i \le 1.
\]

In the following, we shall assume that $a_i \ge 0$ for all $i$. This condition is satisfied
in the first example discussed in this paper; it is also
satisfied whenever $E$ is a general hypersurface of degree $n-1$ in $\P^n$
and $L$ is a general line in $E$, a case which suffices to construct counter-examples
to the Nash problem in all dimensions $\ge 3$. 

We have $\cN_{E/Z}|_L = \O_L(-1)$ since $L$ is a line
in $\P^n$, and thus $\Ext^1(\cN_{E/Z}|_L,\cN_{L/E}) = 0$, which yields the splitting 
of the normal bundle
\[
\cN := \cN_{L/Z} \cong \O_L(-1) \oplus \O_L(-a_2) \oplus \dots \oplus \O_L(-a_{n-1}).
\]
Let $N := \Spec S(\cN^*)$ denote the total space of the normal bundle $\cN$, and
let $L_N^{(\infty)}$ and $L_Z^{(\infty)}$ be the formal neighborhoods of $L$ in $N$ and $Z$. 
The obstructions to construct an isomorphism 
between the two formal neighborhoods are in the groups
$H^1(L,\cN \otimes S^d(\cN^*))$ for $d \ge 2$,
see \cite[Remark~4.6]{ABT}, which in our case are all trivial.
We can therefore choose an isomorphism of formal neighborhoods
\[
\t \colon L_N^{(\infty)} \xrightarrow{\,\cong\,} L_Z^{(\infty)}.
\]
The scheme $L_N^{(\infty)}$ is covered by two affine charts
\[
L_N^{(\infty)} = \Spf\C[z][[v_1,v_2,\dots,v_{n-1}]] \cup 
\Spf\C[1/z][[v_1z,v_2z^{a_2},\dots,v_{n-1}z^{a_{n-1}}]].
\]
Here the variable $z$ is an affine parameter along $L$, and
$v_1$ (in the first chart) corresponds 
to the frame induced by the first summand $\O_L(-1)$. 
Suppose that $\ff(0)$ has coordinate $z=0$ in $L$. 
The restriction of the arc $\f$ to 
the formal neighborhood of $L$ can be written as an $n$-ple of power series
\[
(z(t),v_1(t),\dots,v_{n-1}(t)) \in (\C[[t]])^n
\]
where $z(0) = v_1(0) = \dots = v_{n-1}(0) = 0$, and $v_1'(0) \ne 0$.
The nonvanishing of $v_1'(0)$ is a consequence of the fact that
the arc $\ff$ has order of contact 1 with $E$.

Consider then the formal neighborhood $L_M^{(\infty)}$ of $L$ in the 
total space $M$ of the line bundle $\O_L(-1)$. We can write
\[
L_M^{(\infty)} = \Spf \C[s/t][[t]] \cup \Spf \C[t/s][[s]].
\]
Setting $z = s/t$ and $v_i = v_i(t)$, we obtain compatible maps
\[
\C[z][[v_1,v_2,\dots,v_{n-1}]] \to \C[s/t][[t]], \quad
\C[1/z][[v_1z,v_2z^{a_1},\dots,v_{n-1}z^{a_{n-1}}]] \to \C[t/s][[s]].
\]
These are well defined because $t$ divides $v_i(t)$ for all $i$.
By gluing together and composing with $\t$, we obtain a morphism 
\[
\Psi \colon L_M^{(\infty)} \to Z
\]
whose image contains every truncated jet of $\ff$.
The fact that $v_1'(0) \ne 0$ implies that $\Psi$ is injective and that pulls 
$L$ back to the zero section of $L_M^{(\infty)}$. Note on the other hand that we have
a morphism
\[
\s \colon L_M^{(\infty)} \to \Bl_{(0,0)}\Spec\C[[s,t]] \to \Spec\C[[s,t]]
\]
given in the two charts of $L_M^{(\infty)}$
by the inclusions $\C[[s,t]][s/t] \subset \C[s/t][[t]]$
and $\C[[s,t]][t/s] \subset \C[t/s][[s]]$. Since
$\s(L_M^{(\infty)})$ contains all finite neighborhoods $\Spec\C[s,t]/(s,t)^k$
of the origin in $\Spec\C[[s,t]]$, $\Psi$ induces a map 
\[
\Phi \colon \Spec\C[[s,t]] \to Y 
\]
which, by construction, is a smoothly embedded wedge extending the arc $\phi$.

\section{Koll\'ar's examples}

After the first version of this paper was made public, 
more counter-examples in dimension 3 were found by Koll\'ar \cite{Kol2}. 
This section was added, following a suggestion by one of the referees, 
to compare our examples and technique of proof to those of Koll\'ar.

The paper \cite{Kol2} studies 3-dimensional $cA_1$-type singularities, which are
locally defined by
\[
x_1^2 + x_2^2 + x_3^2 + x_4^m = 0, \quad m \ge 2.
\]
Let $X_m$ denote such hypersurface, and let $O \in X_m$ be the singular point.
It was shown in \cite{Na} that these singularities have at most two essential
valuations, and the precise count 
(exactly two essential valuations if $m$ is odd $\ge 5$, only one
otherwise) is obtained by 
a generalization of Lemma~\ref{l:essential}. 
The Morse lemma with parameters is then used, 
in combination with an inductive argument on the number of blow-ups resolving 
the singularity, to prove that for, every $m \ge 2$, the set $\p_{X_m}^{-1}(O)$ 
is irreducible. It follows that $X_m$ gives a counter-example
to the Nash problem for all odd $m \ge 5$. 

Comparing our methods to those of \cite{Kol2}, the main difference
lies in the proof of the irreducibility of the family of arcs through the singularity.

\begin{bibdiv}
\begin{biblist}

\bib{ABT}{article}{
   author={Abate, Marco},
   author={Bracci, Filippo},
   author={Tovena, Francesca},
   title={Embeddings of submanifolds and normal bundles},
   journal={Adv. Math.},
   volume={220},
   date={2009},
   number={2},
   pages={620--656},
}

\bib{Amb}{article}{
   author={Ambro, Florin},
   title={On minimal log discrepancies},
   journal={Math. Res. Lett.},
   volume={6},
   date={1999},
   number={5-6},
   pages={573--580},
}

\bib{Che}{article}{
   author={Cheltsov, Ivan},
   title={Factorial threefold hypersurfaces},
   journal={J. Algebraic Geom.},
   volume={19},
   date={2010},
   number={4},
   pages={781--791},
}

\bib{CDG}{article}{
   author={Ciliberto, Ciro},
   author={Di Gennaro, Vincenzo},
   title={Factoriality of certain hypersurfaces of $\bold P^4$ with
   ordinary double points},
   conference={
      title={Algebraic transformation groups and algebraic varieties},
   },
   book={
      series={Encyclopaedia Math. Sci.},
      volume={132},
      publisher={Springer},
      place={Berlin},
   },
   date={2004},
   pages={1--7},
}

\bib{Cy}{article}{
   author={Cynk, S{\l}awomir},
   title={Defect of a nodal hypersurface},
   journal={Manuscripta Math.},
   volume={104},
   date={2001},
   number={3},
   pages={325--331},
}

\bib{dFEI}{article}{
   author={de Fernex, Tommaso},
   author={Ein, Lawrence},
   author={Ishii, Shihoko},
   title={Divisorial valuations via arcs},
   journal={Publ. Res. Inst. Math. Sci.},
   volume={44},
   date={2008},
   number={2},
   pages={425--448},
}

\bib{DL}{article}{
   author={Denef, Jan},
   author={Loeser, Fran{\c{c}}ois},
   title={Germs of arcs on singular algebraic varieties and motivic
   integration},
   journal={Invent. Math.},
   volume={135},
   date={1999},
   number={1},
   pages={201--232},
}

\bib{ELM04}{article}{
   author={Ein, Lawrence},
   author={Lazarsfeld, Robert},
   author={Musta{\c{t}}{\v{a}}, Mircea},
   title={Contact loci in arc spaces},
   journal={Compos. Math.},
   volume={140},
   date={2004},
   number={5},
   pages={1229--1244},
}

\bib{EM}{article}{
   author={Ein, Lawrence},
   author={Musta{\c{t}}{\u{a}}, Mircea},
   title={Jet schemes and singularities},
   conference={
      title={Algebraic geometry---Seattle 2005. Part 2},
   },
   book={
      series={Proc. Sympos. Pure Math.},
      volume={80},
      publisher={Amer. Math. Soc.},
      place={Providence, RI},
   },
   date={2009},
   pages={505--546},
}

\bib{FP}{article}{
   author={Fern{\'a}ndez de Bobadilla, Javier},
   author={Pe Pereira, Mar{\'{\i}}a},
   title={Nash problem for surfaces},
   journal={Ann. Math.},
   note={To appear, {\tt arXiv:1102.2212}, also available on the journal webpage},
} 
 
\bib{FP2}{article}{
   author={Fern{\'a}ndez de Bobadilla, Javier},
   author={Pe Pereira, Mar{\'{\i}}a},
   title={Curve selection lemma in infinite dimensional algebraic geometry and arc spaces},
   note={{\tt arXiv:1201.6310}},
} 
 
\bib{Gr}{article}{
   author={Greenberg, Marvin J.},
   title={Rational points in Henselian discrete valuation rings},
   journal={Inst. Hautes \'Etudes Sci. Publ. Math.},
   number={31},
   date={1966},
   pages={59--64},
}

\bib{EGAiv}{article}{
   author={Grothendieck, A.},
   title={\'El\'ements de g\'eom\'etrie alg\'ebrique. IV. \'Etude locale des
   sch\'emas et des morphismes de sch\'emas. III},
   journal={Inst. Hautes \'Etudes Sci. Publ. Math.},
   number={28},
   date={1966},
   pages={255},
}

\bib{Ishii}{article}{
   author={Ishii, Shihoko},
   title={Arcs, valuations and the Nash map},
   journal={J. Reine Angew. Math.},
   volume={588},
   date={2005},
   pages={71--92},
}	 
	 
\bib{IK}{article}{
   author={Ishii, Shihoko},
   author={Koll{\'a}r, J{\'a}nos},
   title={The Nash problem on arc families of singularities},
   journal={Duke Math. J.},
   volume={120},
   date={2003},
   number={3},
   pages={601--620},
}


\bib{Kol2}{article}{
   author={Koll{\'a}r, J{\'a}nos},
   title={Arc spaces of $cA_1$ singularities},
   note={{\tt arXiv:1207.5036}},
}

\bib{KM92}{article}{
   author={Koll{\'a}r, J{\'a}nos},
   author={Mori, Shigefumi},
   title={Classification of three-dimensional flips},
   journal={J. Amer. Math. Soc.},
   volume={5},
   date={1992},
   number={3},
   pages={533--703},
}

\bib{LJ}{article}{
   author={Lejeune-Jalabert, Monique},
   title={Arcs analytiques et r\'esolution minimale des surfaces quasihomog\`enes},
   language={French},
   conference={
      title={S\'eminaire sur les Singularit\'es des Surfaces},
      address={Palaiseau, France},
      date={1976/1977},
   },
   book={
      series={Lecture Notes in Math.},
      volume={777},
      publisher={Springer},
      place={Berlin},
   },
   date={1980},
   pages={303Ð-332},
}

\bib{LJR}{article}{
   author={Lejeune-Jalabert, Monique},
   author={Reguera, Ana J.},
   title={Exceptional divisors that are not uniruled belong to the image of
   the Nash map},
   journal={J. Inst. Math. Jussieu},
   volume={11},
   date={2012},
   number={2},
   pages={273--287},
}

\bib{Na}{article}{
   author={Nash, John F., Jr.},
   title={Arc structure of singularities},
   note={A celebration of John F. Nash, Jr.},
   journal={Duke Math. J.},
   volume={81},
   date={1995},
   number={1},
   pages={31--38 (1996)},
}

\bib{PPP}{article}{
   author={Pl{\'e}nat, Camille},
   author={Popescu-Pampu, Patrick},
   title={Families of higher dimensional germs with bijective Nash map},
   journal={Kodai Math. J.},
   volume={31},
   date={2008},
   number={2},
   pages={199--218},
}

\bib{Re}{article}{
   author={Reguera, Ana J.},
   title={A curve selection lemma in spaces of arcs and the image of the
   Nash map},
   journal={Compos. Math.},
   volume={142},
   date={2006},
   number={1},
   pages={119--130},
}

\end{biblist}
\end{bibdiv}

\end{document}